\begin{document}

\title[Minimal generating sets of groups of Kim-Manturov]
{Minimal generating sets of groups of Kim-Manturov}

\author{Takuya Sakasai}
\address{Graduate School of Mathematical Sciences, 
The University of Tokyo, 
3-8-1 Komaba, 
Meguro-ku, Tokyo, 153-8914, Japan}
\email{sakasai@ms.u-tokyo.ac.jp}
\author{Yuuki Tadokoro}
\address{Faculty of Science Division II, Department of Mathematics, 
Tokyo University of Science, 1-3 Kagurazaka, Shinjuku-ku, Tokyo 162-8601, Japan}
\email{tado@rs.tus.ac.jp}
\author{Kokoro Tanaka}
\address{Department of Mathematics, 
Tokyo Gakugei University, 
4-1-1 Nukuikita-machi, 
Koganei-shi, Tokyo 184-8501, Japan}
\email{kotanaka@u-gakugei.ac.jp}

\thanks{
The authors were partially supported by KAKENHI 
(No.19H01785, No.21H00986, No.21K03239, No.21K03220), 
%Ministry of Education, Science, Sports and Technology, 
Japan Society for the Promotion of Science, Japan.
}

\subjclass[2000]{Primary~20J06 , Secondary~20C30, 20F34}
\keywords{pentagon relation, Coxeter group, Artin group, triangulation}

% 17B40 Automorphisms, derivations, other operators
% 17B56 Cohomology of Lie (super)algebras
% 17B65 Infinite-dimensional Lie (super)algebras
% 20C30 Representation of finite symmetric groups
% 20F14  Derived series, central series, and generalizations
% 20F28 Automorphism groups of groups
% 20F34  Fundamental groups and their automorphisms
% 20J06 Cohomology of groups
% 32G15 Moduli of Riemann surfaces, Teichmu"ller theory
% 55R40 Homology of classifying spaces, characteristic classes
% 57M27 Invariants of knots and 3-manifolds

\newtheorem{thm}{Theorem}[section]
\newtheorem{prop}[thm]{Proposition}
\newtheorem{lem}[thm]{Lemma}
\newtheorem{cor}[thm]{Corollary}
\theoremstyle{definition}
\newtheorem{definition}[thm]{Definition}
\newtheorem{example}[thm]{Example}
\newtheorem{remark}[thm]{Remark}
\newtheorem{problem}[thm]{Problem}
\newtheorem{conj}[thm]{Conjecture}
\renewcommand{\theproblem}{}

\newcommand{\Ker}{\mathop{\mathrm{Ker}}\nolimits}
\newcommand{\Hom}{\mathop{\mathrm{Hom}}\nolimits}
\renewcommand{\Im}{\mathop{\mathrm{Im}}\nolimits}

\newcommand{\Der}{\mathop{\mathrm{Der}}\nolimits}
\newcommand{\Out}{\mathop{\mathrm{Out}}\nolimits}
\newcommand{\Aut}{\mathop{\mathrm{Aut}}\nolimits}
\newcommand{\End}{\mathop{\mathrm{End}}\nolimits}
\newcommand{\Q}{\mathbb{Q}}
\newcommand{\Z}{\mathbb{Z}}
\newcommand{\R}{\mathbb{R}}
\newcommand{\C}{\mathbb{C}}

%%%%%%%%%%%%  For KeTpic  %%%%%%%%%%%%%%%%%
\newlength{\Width}
\newlength{\Height}
\newlength{\Depth}

\begin{abstract}
We consider a series of groups defined by Kim and Manturov. These groups have 
their background in triangulations of a surface and configurations of points, lines or circles 
on the surface. They are expected to have relationships to many geometric objects. 
In this paper, we give a minimal generating set of the group and determine 
the abelianization. We also introduce some related groups 
which might be helpful to understand the structure of the original groups. 
\end{abstract}

\renewcommand\baselinestretch{1.1}
\setlength{\baselineskip}{16pt}

\newcounter{fig}
\setcounter{fig}{0}

\maketitle

\section{Introduction}\label{sec:intro}
In the paper \cite{KM}, Kim and Manturov defined a series of groups $\Gamma_n^4$ 
given by explicit presentations. 
We set $[n]=\{1,2,\ldots, n\}$ for an integer $n \ge 4$. 
The group $\Gamma_n^4$ is generated by the symbols 
$(ijkl)$ for an ordered quadruple of four distinct integers $i,j,k,l \in [n]$. 
Here we write $(ijkl)$ for $d_{ijkl}$ in \cite{KM} for visibility. 
The defining presentation for $\Gamma_n^4$ is as follows.
\begin{definition}\label{def:gamma}
For $n \ge 4$, the group $\Gamma_n^4$ is defined by the following presentation: 

(Generators) $\{(ijkl) \mid \{i,j,k,l\}\subset [n],\ (i,j,k,l\text{: distinct})\}$

(Relations) There are four types of relations: 
\[\begin{array}{ll}
 (1)&  (ijkl)^2 =1; \\
 (2)&  (ijkl)(stuv)=(stuv)(ijkl),\ (|\{i,j,k,l\} \cap \{s,t,u,v\}| \le 2);\\
 (3)&  (ijkl)(ijlm)(jklm)(ijkm)(iklm)=1, \ (i,j,k,l,m \text{ distinct});\\
 (4)&  (ijkl)=(jkli)=(lkji).
\end{array}\]
\end{definition}
We call the relations (1) the {\it involutive relations}, (2) the {\it commutative relations}, 
(3) the {\it pentagon relations} and (4) the {\it dihedral relations}. 
Specifically, we call (3) for fixed $i,j,k,l,m$ the {\it pentagon relation for $\{i,j,k,l,m\}$}, 
where we respect the order of $i,j,k,l,m$. That is, the pentagon relation for $\{j,i,k,l,m\}$ is 
different from that for $\{i,j,k,l,m\}$ for instance. 

The background of the group $\Gamma_n^4$ is explained in the paper \cite{KM} and 
the book \cite{M_book}, where they derive the above presentation 
from some observations on configurations of points and 
triangulations of a surface. 
Indeed, the above relations are obtained from relations among Whitehead moves for 
triangulations as in Figure \ref{fig:geom}, which have their origin in 
the well-known theory of the ideal cell decomposition of the decorated Teichm\"uller space 
(see Penner \cite{Penner} for example). 
However, we should recognize that when we consider the group $\Gamma_n^4$, 
geometric objects like points, lines, triangulated surfaces etc., are unnecessary. 
The group $\Gamma_n^4$ itself stands as a highly abstract object. 
Then the following questions naturally arise: 
to what extent does this abstract group 
capture real geometric properties, and does it exhibit an interesting structure as a group? 
In any case, the paper \cite{KM} and the book \cite[Chapter 15]{M_book} discuss possible relationships 
to other geometrical objects. 
For example, a homomorphism from the pure braid group $P_n$ of $n$ strings 
to $\Gamma_n^4$ is constructed. 

\begin{figure}[htbp]
\begin{center}
\includegraphics[width=0.8\textwidth]{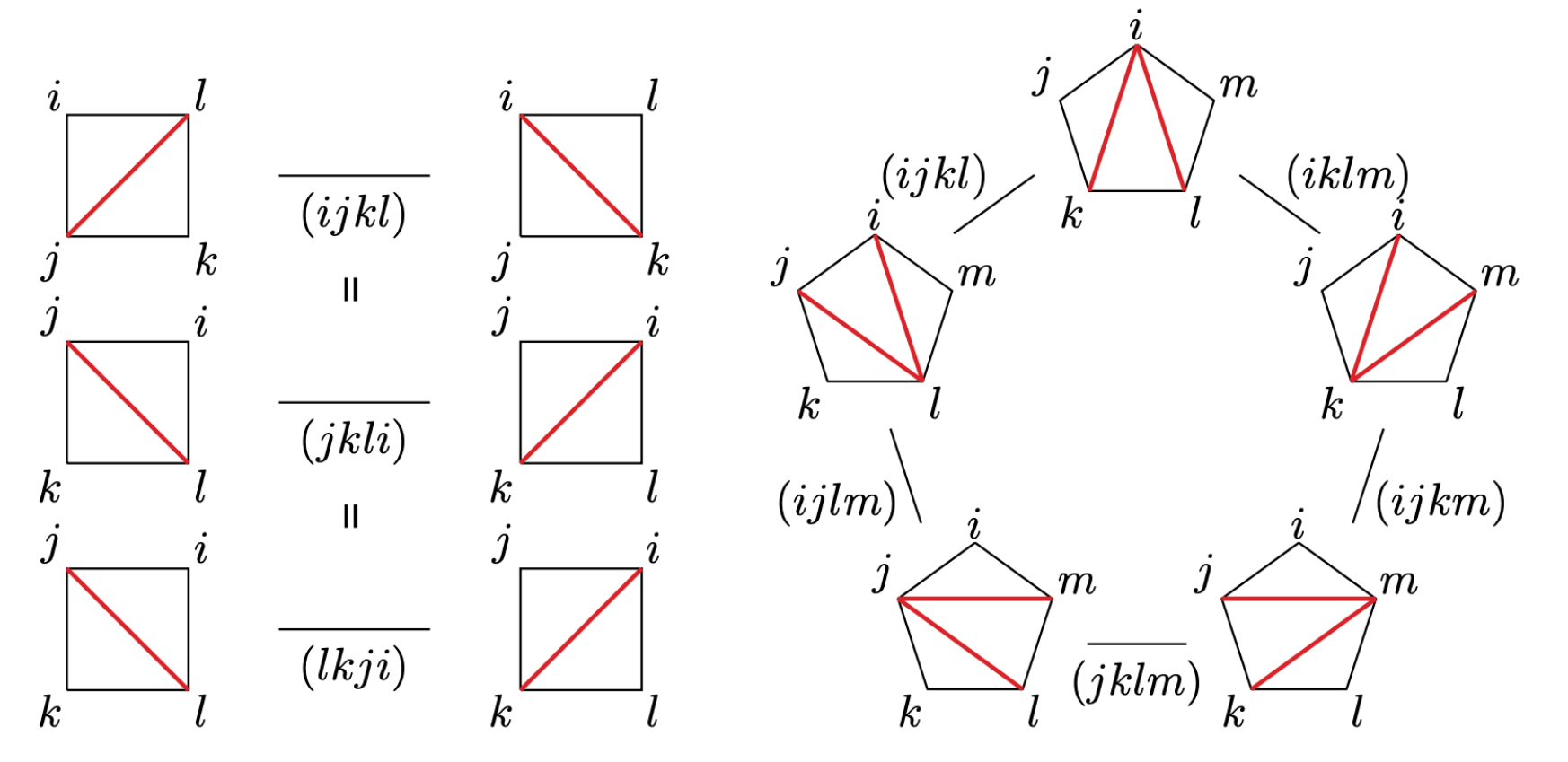}
%\scalebox{1}{\input{figure/20240404_involutive_relation_ver4-1.tex}}\qquad
%\scalebox{1}{\input{figure/20220613_pentagon_relation_ver2.tex}}\\
\caption{Graphical meaning of the relations of $\Gamma_n^4$}
\label{fig:geom}
\end{center}
\end{figure}

\noindent
On the other hand, the structure of the group $\Gamma_n^4$ itself has not yet been studied. 
As long as the authors checked, even the non-triviality of $\Gamma_n^4$ for general $n$ is not given 
in a written form, although this fact is not difficult to see it. 
The purpose of this paper is to attack this issue from a purely group theoretical point of view. 
Indeed, our starting point is the presentation of Definition \ref{def:gamma}. 
The main results include to give a minimal generating set and determine 
the abelianization $H_1 (\Gamma_n^4)$ 
of $\Gamma_n^4$. We will see that $H_1 (\Gamma_n^4)\neq 0$ for all $n \ge4$, 
which directly implies the non-triviality of $\Gamma_n^4$. 

The contents of this paper is as follows. 
In Section \ref{sec:gammahat}, 
we introduce another series of groups denoted as $\widehat{\Gamma_n^4}$, which 
have a relationship with the groups $\Gamma_n^4$ 
as that between Artin groups and Coxeter groups. 
We obtain their minimal generating sets and determine their abelianizations 
ahead of these tasks for $\Gamma_n^4$, which are completed in Section \ref{sec:gamma}. 
Then we take a detour in Section \ref{sec:rep}, where we interpret our computation 
of the complex abelianization $H_1 (\widehat{\Gamma_n^4};\mathbb{C})$ from a 
representation theoretical point of view. 
This interpretation might hold significance for further studies.  
In Section \ref{sec:n5}, we focus on the case where $n=5$. 
We see that the group $\Gamma_5^4$ is an infinite non-commutative group. 
We provide two distinct proofs: one uses the program GAP and 
the other is by hand.
Finally, in Section \ref{sec:delta}, we introduce yet another sequence of groups 
denoted as $\Delta_n^4$, 
which the authors expect to be helpful to study the structure of $\Gamma_n^4$. 
We prove some fundamental facts concerning $\Delta_n^4$. 

In a forthcoming paper, we discuss the structure of $\Gamma_n^4$ for $n \ge 6$. 

\section{Minimal generating set of $\widehat{\Gamma_n^4}$}\label{sec:gammahat}

If we remind the relationship between the braid group and the permutation group, 
more generally Artin groups and Coxeter groups, it would be natural to 
introduce the following groups. 

\begin{definition}\label{def:gammahat}
For $n \ge 4$, the group $\widehat{\Gamma_n^4}$ is defined by the following presentation: 

(Generators) $\{(ijkl) \mid \{i,j,k,l\}\subset [n],\ (i,j,k,l\text{: distinct})\}$

(Relations) There are three types of relations: 
\[\begin{array}{ll}
 (2)&  (ijkl)(stuv)=(stuv)(ijkl),\ (|\{i,j,k,l\} \cap \{s,t,u,v\}| \le 2);\\
 (3)' &  (ijkl)(ijlm)(jklm)(ijkm)^{-1}(iklm)^{-1}=1, \ (i,j,k,l,m \text{ distinct});\\
 (4)' &  (ijkl)=(jkli)^{-1}=(lkji)^{-1}.
\end{array}\]
\end{definition}

We call the relation $(3)'$ the {\it signed pentagon relation} for $\{i,j,k,l,m\}$ and 
call the relations $(4)'$ the {\it signed dihedral relation}. 
The background of the above relations comes from Figure \ref{fig:geom2}. 

\begin{figure}[htbp]
\begin{center}
\includegraphics[width=0.75\textwidth]{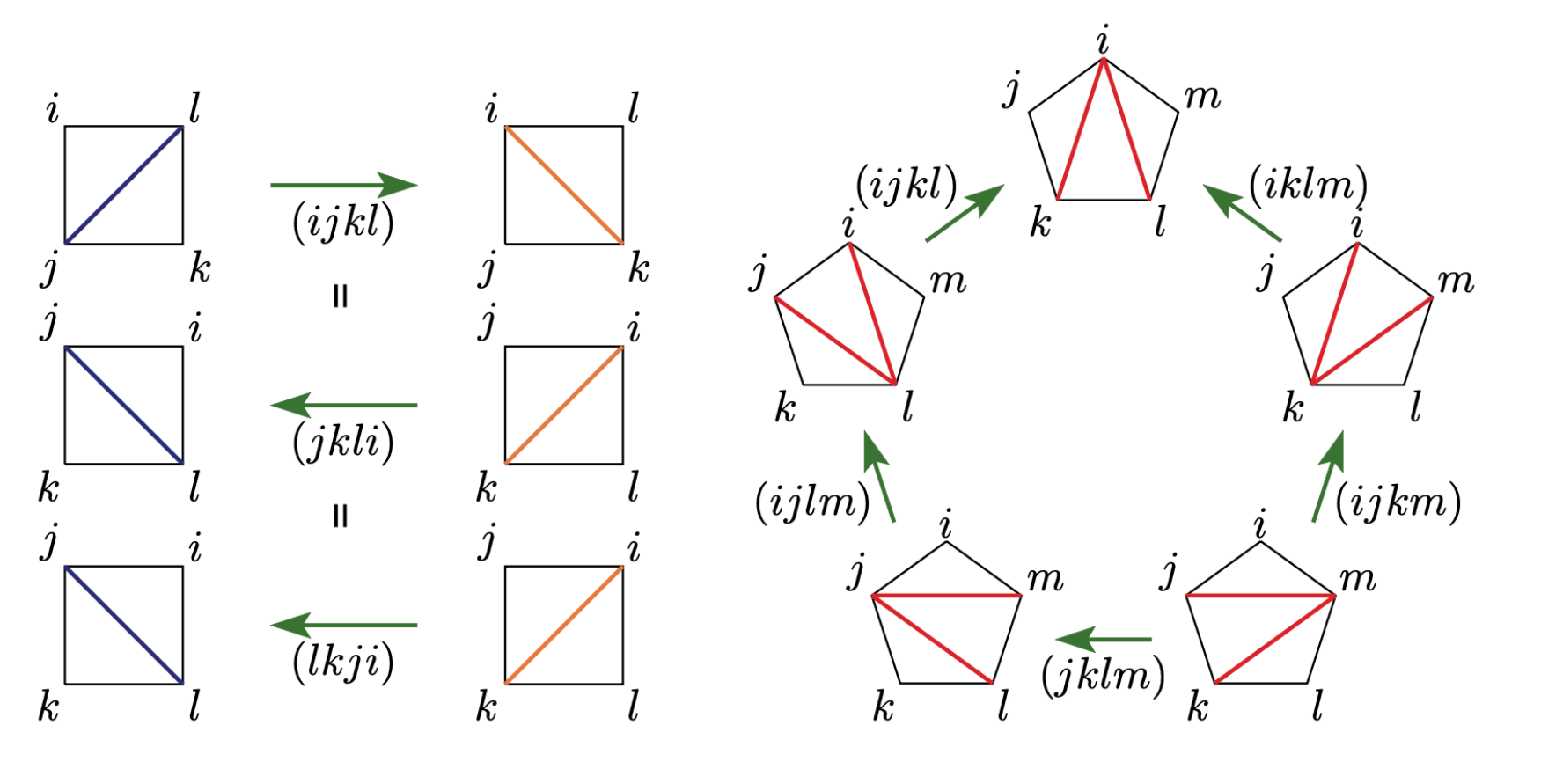}
%\scalebox{1}{\input{figure/20240404_involutive_relation_ver4-2.tex}}\qquad
%\scalebox{1}{\input{figure/20220908_pentagon_relation_ver2_2.tex}}\\
\caption{Graphical meaning of the relations of $\widehat{\Gamma_n^4}$}
\label{fig:geom2}
\end{center}
\end{figure}

\noindent

We have a natural projection $\widehat{\Gamma_n^4} \twoheadrightarrow \Gamma_n^4$ 
sending $(ijkl) \in \widehat{\Gamma_n^4}$ to $(ijkl) \in \Gamma_n^4$.  
When $n=4$, the group $\widehat{\Gamma_4^4}$ is generated by 
$\{(1234), (1324), (1243)\}$ with no relations. 
Hence $\widehat{\Gamma_4^4} \cong \mathbb{Z}^{\ast 3}$, a free group of rank $3$. 
For $n \ge 5$, we now discuss generating sets of $\widehat{\Gamma_n^4}$. 
\begin{thm}\label{thm:generating_gammahat}
For $n \ge 4$, the group $\widehat{\Gamma_n^4}$ is generated by the set $\Lambda$ consisting of 
\begin{itemize}
\item[(G1)] $(1 2 3 k)$ with $4 \le k \le n$, 

\item[(G2)] $(1 i 2 k)$ with $3 \le i < k \le n$, 

\item[(G3)] $(1 i j k)$ with $2 \le i < k < j \le n$. 
\end{itemize}
Here, there are totally $(n-3)$, $\begin{pmatrix} n-2 \\ 2\end{pmatrix}$, 
$\begin{pmatrix} n-1 \\ 3\end{pmatrix}$ elements of 
Types {\rm (G1), (G2), (G3)}, respectively. Therefore 
$\widehat{\Gamma_n^4}$ is generated by 
\[N_n:=(n-3)+\begin{pmatrix} n-2 \\ 2\end{pmatrix}
+\begin{pmatrix} n-1 \\ 3\end{pmatrix}=\frac{(n-3)(n^2+2)}{6}=
\begin{pmatrix} n \\ 3\end{pmatrix}-1\]
elements. 
\end{thm}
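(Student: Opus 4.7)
The plan is to show that every positive generator of $\widehat{\Gamma_n^4}$ lies in the subgroup $H := \langle \Lambda \rangle$, via a case analysis organized so that each pentagon reduction depends only on already handled cases. First, using $(4)'$, I would note that the stabilizer in $D_4$ of each positive generator has order $4$, so each 4-subset $S \subset [n]$ has exactly three positive orbits, indexed (when $S \ni 1$) by which element of $S \setminus \{1\}$ is opposite to $1$ in the underlying 4-cycle. Inspection shows that (G3) supplies the orbit with $\max(S \setminus \{1\})$ opposite to $1$ for every $S \ni 1$, (G2) the orbit with $2$ opposite for every $S \supset \{1,2\}$, and (G1) the orbit with $3$ opposite for every $S \supset \{1,2,3\}$. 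It remains to express: for $\{1,2,b,c\}$ with $4 \le b < c$, the orbit in which $b$ is opposite to $1$ (call this R1); for $\{1,a,b,c\}$ with $3 \le a < b < c$, the orbits in which $b$ and $a$ are opposite (R2 and R3); and all three orbits of 4-subsets disjoint from $\{1\}$ (A1, A2, A3).

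Next, I would dispose of A1, A2, A3 on subsets of the form $\{2,a,b,c\}$ with $3 \le a < b < c$. The pentagons on $\{1,2,a,b,c\}$ with orderings $(a,1,b,c,2)$, $(a,1,b,2,c)$ and (in the case $a=3$) $(1,2,3,c,b)$, after rewriting the five factors via $(4)'$, express A1, A3 and A2 of $\{2,a,b,c\}$ purely as products of (G2)- and (G3)-elements; the point is that placing both $1$ and $2$ in outer positions of the pentagon forces every non-target factor to be a (G2)- or (G3)-representative. Building on these, the pentagons $(2,1,a,b,c)$, $(1,3,2,b,c)$ and $(1,2,b,a,c)$ express R2, R1 and R3 respectively, each reducing to $\Lambda$ together with the A-classes just treated. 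The remaining A2 on $\{2,a,b,c\}$ with $a \ge 4$ follows from the pentagon $(1,2,a,c,b)$ once R1 is known.

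Finally, for a 4-subset $\{a,b,c,d\} \subset \{3,\ldots,n\}$ disjoint from $\{1,2\}$, the pentagons on $\{1,a,b,c,d\}$ with orderings $(1,a,b,c,d)$ and $(1,a,b,d,c)$ reduce A1 and A2 to R2 and (G3)-generators, while the pentagon on $\{2,a,b,c,d\}$ with ordering $(2,a,c,b,d)$ reduces A3 to the A-classes on 4-subsets of the form $\{2,\cdot,\cdot,\cdot\}$. The main obstacle is an apparent circular dependence: every natural pentagon with one auxiliary element expressing R3 on $\{1,a,b,c\}$ invokes A3 on $\{2,a,b,c\}$, and vice versa. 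This is broken by the pentagon $(a,1,b,2,c)$ in the previous step, which uses both $1$ and $2$ as auxiliaries and expresses A3 on $\{2,a,b,c\}$ directly as a word in (G2) and (G3); once this is in hand, R3 follows from $(1,2,b,a,c)$.
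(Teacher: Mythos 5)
Your proposal is correct. I checked each of the pentagons you name --- $(a,1,b,c,2)$, $(a,1,b,2,c)$, $(1,2,3,c,b)$ for the A-classes on $\{2,a,b,c\}$; $(2,1,a,b,c)$, $(1,3,2,b,c)$, $(1,2,b,a,c)$, $(1,2,a,c,b)$ for R2, R1, R3 and the remaining A2; and $(1,a,b,c,d)$, $(1,a,b,d,c)$, $(2,a,c,b,d)$ for subsets missing $1$ and $2$ --- and in every case the non-target factors do reduce via $(4)'$ to the classes you claim, and your order of reduction is acyclic. The paper uses the same toolkit but a different architecture: it first dispatches all subsets not containing $1$ in one stroke via the pentagon on $\{1,i,j,k,l\}$, reducing to the set $\Upsilon=\{(1abc)\mid a<c\}$ (exactly your orbit representatives for $S\ni 1$), and then treats three cases corresponding to your R2, R1, R3, each by composing two explicit pentagons, the second of which plays precisely the role of your A-class lemmas on $\{2,a,b,c\}$ (e.g.\ the paper's pentagon for $(i,2,k,j,1)$ does the job of your $(a,1,b,c,2)$). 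The trade-offs: the paper's route is shorter for the subsets missing $1$ and produces explicit words in $\Lambda$, while yours makes the dependency structure transparent and isolates the one real subtlety --- the apparent R3/A3 circularity, which both arguments break with a pentagon using $1$ and $2$ simultaneously as auxiliaries. One genuinely different choice is your reduction of R1 through $(1,3,2,b,c)$, which routes through R2, whereas the paper's case (II) uses $(1,2,j,k,3)$ and needs only (G1)- and (G3)-elements; both are valid.
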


\begin{proof}
When $n=4$, the statement says that $\widehat{\Gamma_n^4}$ is generated by 
$(1234), (1324), (1243)$. It is clearly true. 

We now assume that $n \ge 5$. The signed pentagon relation for $\{1,i,j,k,l\}$ says that 
\[(i j k l)=(1 i k l)^{-1} (1 i j k)^{-1}(1 j k l)(1 i j l).\]
Together with the signed dihedral relation, we see that 
\[\Upsilon:=\{ (1 a b c) \mid a < c\}\]
is a generating set of $\widehat{\Gamma_n^4}$. Note that $\Lambda \subset \Upsilon$. 
We now show that if $(1 a b c) \in \Upsilon$ is not in $\Lambda$, 
then it is written as the product of elements of $\Lambda$. 
That is, such an element is removable from $\Upsilon$ to generate 
$\widehat{\Gamma_n^4}$. 
An element of $\Upsilon$ which is {\it not} in $\Lambda$ satisfies just one of the following: 
\begin{itemize}
\item[(I)] $(1 i j k)$ with $3 \le i < j < k \le n$, 

\item[(II)] $(1 2 j k)$ with $4 \le j < k \le n$, 

\item[(III)] $(1 i j k )$ with $3 \le j < i < k \le n$.  
\end{itemize}

For an element $(1 i j k)$ of the case (I), we consider the signed pentagon relations
\[(1ijk)(1ik2)(ijk2)(1ij2)^{-1}(1jk2)^{-1}=1\]
for $\{ 1,  i,  j, k, 2\}$ and 
\[(i2kj)(i 2 j 1)(2 k j 1)(i 2 k 1)^{-1}(i k j 1)^{-1}=1\]
for $\{ i, 2, k, j, 1\}$. From these relations, we have 
\begin{align*}
(1 i j k) &= (1jk2)(1ij2)(ijk2)^{-1}(1ik2)^{-1}\\
&=(12kj)(12ji)\underline{(i2kj)^{-1}}(12ki)^{-1}\\
&=(12kj)(12ji)(i 2 j 1)(2 k j 1)(i 2 k 1)^{-1}(i k j 1)^{-1}(12ki)^{-1}\\
&=(12kj)(12ji)(1i2j)^{-1}(12kj)^{-1}(1i2k)(1ikj)(12ki)^{-1}. 
\end{align*}
The last expression consists of elements in $\Lambda$. 

For an element $(12 j k)$ of the case (II), we consider the signed pentagon relations 
\[(1 2 j k) (1 2 k 3) (2 j k 3) (1 2 j 3)^{-1} (1 j k 3)^{-1}=1 \]
for $\{ 1,2, j, k, 3\}$ and 
\[(2 1 j k) (2 1 k 3) (1 j k 3) (2 1 j 3)^{-1} (2 j k 3)^{-1}=1 \]
for $\{2, 1, j ,k ,3\}$. From these relations, we have 
\begin{align*}
(1 2 j k) &= (1 j k 3)(12j3) \underline{(2jk3)^{-1}} (12k3)^{-1} \\
&= (1 j k 3)(12j3) (2 1 j 3)(1 j k 3)^{-1}(2 1 k 3)^{-1}(2 1 j k)^{-1}(12k3)^{-1} \\
&= (13kj)(12j3)(123j)^{-1}(13kj)^{-1}(123k)(12kj)(12k3)^{-1}.
\end{align*}
The last expression consists of elements in $\Lambda$. 

For an element $(1 i j k)$ of the case (III), we consider the signed pentagon relations
\[(1 i j k) (1 i k 2) (i j k 2)(1 i j 2)^{-1} (1 j k 2)^{-1}=1 \]
for $\{ 1, i, j, k, 2\}$ and  
\[(i 2 k j) (i 2 j 1) (2 k j 1) (i 2 k 1)^{-1} (i k j 1)^{-1}=1 \]
for $\{i, 2, k, j, 1\}$. From these relations, we have 
\begin{align*}
(1 i j k) &= (1 j k 2)(1 i j 2)(i j k 2)^{-1} (1 i k 2)^{-1}\\
&=(1j k2)(1i j 2)\underline{(i 2 k j)^{-1}} (1ik2)^{-1}\\
&=(1j k2)(1i j 2) (i 2 j 1) (2 k j 1) (i 2 k 1)^{-1} (i k j 1)^{-1}(1ik2)^{-1}\\
&=(12kj)\underline{(12ji)}(1j2i)^{-1}(12kj)^{-1}(1i2k)(1jki)(12ki)^{-1}. 
\end{align*}
If $j=3$, the last expression consists of elements in $\Lambda$. Otherwise,  
we use the equality in the case (II) to get 
\begin{align*}
(1 i j k) &= (12kj)(13ij)(12j3)(123j)^{-1}(13ij)^{-1}(123i)(12ij)(12i3)^{-1} \\
& \quad \cdot (1j2i)^{-1}(12kj)^{-1}(1i2k)(1jki)(12ki)^{-1}. 
\end{align*}
The last expression consists of elements in $\Lambda$. 
This completes the proof. 
\end{proof}

Theorem \ref{thm:generating_gammahat} says that there exists 
a  surjective homomorphism 
\[\mathbb{Z}^{\ast N_n} \twoheadrightarrow \widehat{\Gamma_n^4}\]
for $n \ge 4$. Passing to their abelianizations, we have a surjective homomorphism 
\[\mathbb{Z}^{N_n} \twoheadrightarrow  H_1 (\widehat{\Gamma_n^4}).\]
We will see that the last surjection is an isomorphism. 

Let $[n]_k$ denote the set 
of $k$ elements subsets of $[n]=\{1,2,\ldots, n\}$. 
We denote by $\mathbb{Z} [n]_k$ the free abelian group based 
by the set $[n]_k$.  
Consider the homomorphism
\[\Phi_3 \colon \widehat{\Gamma_n^4}  \longrightarrow \mathbb{Z} [n]_3\]
given by 
\[\Phi_3 ((ijkl)) = \{i, j, k\} - \{i, j, l\}+\{i,k,l\}-\{j, k, l\},\]
which is well-defined.

\begin{thm}\label{thm:abelianization_gammahat}
For $n \ge 4$, the image of the homomorphism 
$\Phi_3 \colon \widehat{\Gamma_n^4} \to \mathbb{Z} [n]_3 \cong 
\mathbb{Z}^{N_n+1}$ is 
isomorphic to $\mathbb{Z}^{N_n}$. 
In fact, the image is not a direct summand in $\mathbb{Z} [n]_3$. 
Combining with Theorem $\ref{thm:generating_gammahat}$, 
we have 
\[H_1 (\widehat{\Gamma_n^4}) \cong \mathbb{Z}^{N_n}.\]
\end{thm}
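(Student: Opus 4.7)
The plan has four steps: (a) verify $\Phi_3$ is well defined on $\widehat{\Gamma_n^4}$; (b) compute the rank of $\Im\Phi_3$; (c) exhibit torsion in the cokernel $\mathbb{Z}[n]_3/\Im\Phi_3$, proving the image is not a direct summand; (d) combine with Theorem~\ref{thm:generating_gammahat} to identify $H_1(\widehat{\Gamma_n^4})$. Step (a) is routine against relations (2), (3)$'$, (4)$'$: commutativity holds automatically in the abelian target; for (4)$'$ one computes $\Phi_3((jkli)) = \Phi_3((lkji)) = -\Phi_3((ijkl))$ directly from the formula; for (3)$'$ every 3-subset of $\{i,j,k,l,m\}$ enters the alternating sum with matching $+1$ and $-1$ contributions that cancel.

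For steps (b) and (c), the engine is the identity
\begin{equation*}
\Phi_3((ikjl)) - \Phi_3((ijkl)) = 2\bigl(\{i,j,l\} - \{i,k,l\}\bigr)
\end{equation*}
for any four distinct $i,j,k,l \in [n]$, obtained by direct subtraction. Since the coefficients of $\Phi_3((ijkl))$ sum to $0$, the image lies in the augmentation kernel $\ker\epsilon \subset \mathbb{Z}[n]_3$, which has rank $N_n = \binom{n}{3} - 1$. The identity places $2(S - T) \in \Im\Phi_3$ for every pair $S, T \in [n]_3$ with $|S \cap T| = 2$, i.e., for every edge of the Johnson graph $J(n, 3)$. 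Because $J(n, 3)$ is connected for $n \geq 4$, any difference $S - T$ of 3-subsets decomposes into an integral sum of such edge differences, so $2\ker\epsilon \subseteq \Im\Phi_3 \subseteq \ker\epsilon$. Hence $\Im\Phi_3$ has rank $N_n$ and, being torsion-free, is isomorphic to $\mathbb{Z}^{N_n}$.

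For step (c), I would define $\psi \colon \mathbb{Z}[n]_3 \to \mathbb{Z}/2\mathbb{Z}$ by $\psi(\{a,b,c\}) = 1$ if $\{1,2\} \subseteq \{a,b,c\}$ and $\psi(\{a,b,c\}) = 0$ otherwise. A case analysis on $|\{1,2\} \cap \{i,j,k,l\}|$ shows $\psi(\Phi_3((ijkl))) = 0$ always: when this intersection has size at most $1$ all four 3-faces evaluate to $0$, and when $\{1,2\} \subseteq \{i,j,k,l\}$ exactly two of the four 3-faces contain $\{1,2\}$, and for each of the six positions of $\{1,2\}$ inside $(i,j,k,l)$ the signs of these two 3-faces in $\Phi_3((ijkl))$ sum to $0$ modulo $2$. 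Thus $\psi$ vanishes on $\Im\Phi_3$. However $\psi(\{1,2,4\} - \{1,3,4\}) = 1 - 0 = 1$, so $\{1,2,4\} - \{1,3,4\} \notin \Im\Phi_3$, whereas twice it equals $\Phi_3((1324)) - \Phi_3((1234)) \in \Im\Phi_3$ by the key identity. This 2-torsion in the cokernel prevents $\Im\Phi_3$ from being a direct summand.

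For step (d), Theorem~\ref{thm:generating_gammahat} provides a surjection $\mathbb{Z}^{N_n} \twoheadrightarrow H_1(\widehat{\Gamma_n^4})$, and $\Phi_3$ factors through the abelianization. The composite $\mathbb{Z}^{N_n} \twoheadrightarrow H_1(\widehat{\Gamma_n^4}) \twoheadrightarrow \Im\Phi_3 \cong \mathbb{Z}^{N_n}$ is a surjection between free abelian groups of the same finite rank and is therefore an isomorphism; both constituent maps are then isomorphisms, giving $H_1(\widehat{\Gamma_n^4}) \cong \mathbb{Z}^{N_n}$. The main obstacle I anticipate is locating the key identity $\Phi_3((ikjl)) - \Phi_3((ijkl)) = 2(\{i,j,l\} - \{i,k,l\})$; once that is in hand the rank computation, the Johnson-graph reduction, and the non-saturation witness all fall out cleanly, whereas without it one would be pushed into Smith normal form computations on presentation matrices whose size grows cubically in $n$.
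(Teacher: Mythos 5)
Your proposal is correct --- the key identity $\Phi_3((ikjl))-\Phi_3((ijkl))=2\bigl(\{i,j,l\}-\{i,k,l\}\bigr)$, the vanishing of $\psi$ on $\Im\Phi_3$, and the Hopfian argument in step (d) all check out --- but it takes a genuinely different route from the paper at both nontrivial points, so a comparison is worthwhile. For the rank, the paper orders $[n]_3$ lexicographically and exhibits, for each basis element except the last, an explicit word (namely $(ijkn)$, $(i(n-1)jn)(ij(n-1)n)^{-1}$, or $(i(n-2)(n-1)n)(i(n-2)n(n-1))$, depending on how many of $n-1,n$ lie in the triple) whose image has that basis element as its leading term; this staircase gives rank at least $N_n$, with the upper bound borrowed from the surjection coming out of Theorem \ref{thm:generating_gammahat}. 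You instead sandwich $2\Ker\epsilon\subseteq\Im\Phi_3\subseteq\Ker\epsilon$ via connectivity of the Johnson graph $J(n,3)$, which determines the rank without invoking the generating set at all, pins the image down to within the finite $2$-group $\Ker\epsilon/\Im\Phi_3$, and treats $n=4$ and $n\ge 5$ uniformly (the paper does $n=4$ by a separate direct check). For the non-direct-summand claim, the paper defers to Remark \ref{rem:notdirectsummand}, where it compares $\Phi_3(\widehat{\Gamma_n^4})\cong\Z^{\binom{n}{3}-1}$ with the mod-$2$ image $\Phi_3^{(2)}(\Gamma_n^4)\cong(\Z/2\Z)^{\binom{n-1}{3}}$ computed from the acyclicity of the simplicial chain complex of the $(n-1)$-simplex; you instead exhibit the explicit $2$-torsion class of $\{1,2,4\}-\{1,3,4\}$ in the cokernel, detected by the functional $\psi$, which is more elementary and self-contained (though it reveals less about the precise structure of the cokernel than the paper's dimension count does). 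Your step (d) is exactly the ``usual argument in the theory of abelian groups'' the paper alludes to.
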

\begin{proof}
When $n=4$, the group $\widehat{\Gamma_4^4}$ is 
the free group of rank $3$ generated by $(1234)$, $(1324)$ and $(1243)$. 
We can directly check that the image of $\Phi_3$ is isomorphic to $\mathbb{Z}^3$, 
which is not a direct summand in $\mathbb{Z} [4]_3 \cong \mathbb{Z}^4$. 
In fact, $\mathbb{Z} [4]_3/ \Phi_3(\widehat{\Gamma_4^4}) 
\cong \mathbb{Z} \oplus (\mathbb{Z}/2\mathbb{Z})^2$. 
%(123)-(124)+(134)-(234)
%(132)-(134)+(124)-(324)
%(124)-(123)+(143)-(243)
%
% 1   1 -1 
% -1  1  1
% 1  -1  1
% -1 -1 -1
%
% 1 1 -1
% 0  2  0
% 0  -2 2 
% 0  0 -2
%-

Now we assume that $n \ge 5$. We endow the basis $[n]_3$ of $\mathbb{Z} [n]_3$ 
with the lexicographic order $\prec$ after writing each element of $[n]_3$ in the form 
$\{i,j,k\}$ with $i < j < k$. That is, 
\[\{1,2,3\} \prec \{1,2,4\} \prec \cdots \prec \{n-2, n-1, n\}.\]
By this total order, we regard  $[n]_3$ as an ordered basis of $\mathbb{Z} [n]_3$. 
Let us show that for each $\{i,j,k\} \in [n]_3$ except $\{n-2, n-1, n\}$, there exists 
an element $w$ of $\widehat{\Gamma_n^4}$ satisfying
\begin{itemize}
\item \ The coefficient of $\{i, j, k\}$ in $\Phi_3 (w)$ is non-zero, 
\item \ If $\{i', j', k'\} \prec \{i, j, k\}$ and $\{i', j', k'\} \neq \{i, j, k\}$, 
% with respect to the order $\prec$, 
the coefficient of $\{i', j', k'\}$ in $\Phi_3 (w)$ is zero. 
\end{itemize}

For $\{i,j,k\}$ with $1 \le i < j < k \le n-1$, we have 
\[\Phi_3 ((ijkn))=\{i, j,k\} - \{i, j,n\} + \{i,k,n\} - \{j, k,n\}.\]
Hence we may take $w=(ijkn)$. 

For $\{i, j, n\}$ with $1 < i < j \le n-2$, we have 
\begin{align*}
\Phi_3 ((i (n-1) jn)) &= \{i, n-1, j\} - \{i, n-1, n\} + \{i, j, n\} - \{n-1, j, n\}, \\
\Phi_3 ((i j (n-1) n)) &= \{i,j, n-1\} - \{i, j, n\} + \{i, n-1,n\} - \{j, n-1, n\}. 
\end{align*}
 Then, 
 \[\Phi_3 ((i (n-1) jn)(i j (n-1) n)^{-1})=2\{i, j, n\} -2\{i, n-1, n\}.\]
 Hence we may take $w=(i (n-1) jn)(i j (n-1) n)^{-1}$. 
 
For $\{i, n-1, n\}$ with $1 \le i \le n-3$, we have 
\[\Phi_3 ((i (n-2)(n-1)n) (i (n-2) n (n-1)))=2\{i, n-1, n\} -2\{n-2, n-1, n\}.\]
Hence we may take $w=(i (n-2)(n-1)n) (i(n-2) n (n-1))$. 

From the above, our claim except that the image of $\Phi_3$ is not a direct summand 
readily follows by a usual argument in the theory of abelian groups.  
The remaining part will be proved in the next section (see Remark \ref{rem:notdirectsummand}).
\end{proof}

Theorem \ref{thm:generating_gammahat} gives an upper bound of 
the minimum number of generators of $\widehat{\Gamma_n^4}$ 
while Theorem \ref{thm:abelianization_gammahat} gives a lower bound. 
Since they coincide, we have the following. 
\begin{cor}\label{cor:minimalgen_hat}
For $n \ge 4$, the group $\widehat{\Gamma_n^4}$ needs 
$N_n=\begin{pmatrix} n \\ 3\end{pmatrix}-1$ elements to generate. 
The set $\Lambda$ in Theorem $\ref{thm:generating_gammahat}$ 
is a minimal generating set of $\widehat{\Gamma_n^4}$.
\end{cor}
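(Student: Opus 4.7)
The plan is very short because the two preceding theorems already do essentially all the work; the corollary is the clean packaging of their combination.

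First I would set up the standard lower bound via abelianization. If $S$ is any generating set of $\widehat{\Gamma_n^4}$, then the image of $S$ under the abelianization map $\widehat{\Gamma_n^4} \twoheadrightarrow H_1(\widehat{\Gamma_n^4})$ generates $H_1(\widehat{\Gamma_n^4})$. By Theorem \ref{thm:abelianization_gammahat} the target is isomorphic to $\mathbb{Z}^{N_n}$, and a free abelian group of rank $N_n$ cannot be generated by fewer than $N_n$ elements (tensoring with $\mathbb{Q}$ reduces to linear algebra over $\mathbb{Q}$, where a generating set of a $\mathbb{Q}$-vector space of dimension $N_n$ has at least $N_n$ elements). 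Hence every generating set of $\widehat{\Gamma_n^4}$ has cardinality at least $N_n$.

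Next I would invoke Theorem \ref{thm:generating_gammahat}, which exhibits the explicit set $\Lambda$ of cardinality exactly $N_n$ generating $\widehat{\Gamma_n^4}$. Combining the two bounds gives that $N_n$ is simultaneously an upper and a lower bound for the minimal number of generators, so $\Lambda$ attains the minimum and is therefore a minimal generating set.

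There is no real obstacle here; the content is entirely in Theorems \ref{thm:generating_gammahat} and \ref{thm:abelianization_gammahat}. The only small point worth stating explicitly is that the lower bound argument uses merely that $H_1$ is \emph{torsion-free} of rank $N_n$, not the stronger statement about the image of $\Phi_3$ failing to be a direct summand; so the corollary follows whether or not one uses the finer information from the end of Theorem \ref{thm:abelianization_gammahat}.
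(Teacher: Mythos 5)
Your argument is correct and is exactly the paper's own reasoning: Theorem \ref{thm:generating_gammahat} supplies the upper bound via the explicit set $\Lambda$ of size $N_n$, and Theorem \ref{thm:abelianization_gammahat} supplies the lower bound since $H_1(\widehat{\Gamma_n^4})\cong\mathbb{Z}^{N_n}$ cannot be generated by fewer than $N_n$ elements. Your added remark that only the rank of $H_1$ (not the direct-summand information) is needed is accurate and harmless.
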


\begin{remark}\label{rem:Phi2}
For $n \ge 5$, consider the homomorphism $\Phi_2 \colon \widehat{\Gamma_n^4}  \to \mathbb{Z} [n]_2$ 
defined by 
\[\Phi_2 ((ijkl)) = \{i, k\} - \{j, l\}.\]
Indeed our signed dihedral relation in $\widehat{\Gamma_n^4}$ was designed so that 
$\Phi_2$ is well-defined. 
Since 
\begin{align*}
&\Phi_2 ((i (n-1) j n)) =\{i, j\} -\{n-1, n\},\\
&\Phi_2 ((i j (n-1) k)(j (n-1)kn)) =\{i, n-1\} -\{n-1, n\},\\
&\Phi_2 ((i j n k)(j (n-1)kn)) =\{i, n\} -\{n-1, n\}
\end{align*}
for $i, j \le n-2$, we see that the image of $\Phi_2$ is precisely
\[\left\{\begin{array}{c|l}
\sum_{1 \le i < j \le n} a_{i,j} \{i, j\} \in \mathbb{Z}[n]_2 &  \sum_{1 \le i < j \le n} a_{i,j} =0
\end{array}\right\}.\]
The relationship between $\Phi_3$ and $\Phi_2$ is as follows. 
Define a homomorphism $\eta_3 \colon \mathbb{Z}[n]_3 \to \mathbb{Z}[n]_2$ by
\[\eta_3 (\{i,j,k\})=\{i,j\}+\{j,k\}+\{k,i\}.\]
Then it is easily checked that $2\Phi_2 = \eta_3 \circ \Phi_3$ holds. Hence, 
$\Phi_2$ does not have much information about $H_1 (\widehat{\Gamma_n^4})$ 
than $\Phi_3$. 
\end{remark}

\section{Minimal generating set of $\Gamma_n^4$}\label{sec:gamma}

Here we focus on the original groups $\Gamma_n^4$. 
When $n=4$, the group $\Gamma_4^4$ is generated by 
$\{(1234), (1324),(1243)\}$ and has only the involutive relations. 
Hence $\Gamma_4^4 \cong (\mathbb{Z}/2\mathbb{Z})^{\ast 3}$, the free product of three copies of $\Z/2\Z$. 

Let us give a minimal generating set of $\Gamma_n^4$ for general $n \ge 5$. 
Our proof of Theorem \ref{thm:generating_gammahat} 
is applicable words-by-words to $\Gamma_n^4$ after 
replacing $\widehat{\Gamma_n^4}$ by $\Gamma_n^4$. We have  
\begin{thm}\label{thm:generating_gamma}
For $n \ge 4$, the group $\Gamma_n^4$ is generated by the set $\Lambda$ 
consisting of 
\begin{itemize}
\item[(G1)] $(1 2 3 k)$ with $4 \le i \le n$, 

\item[(G2)] $(1 i 2 k)$ with $3 \le i < k \le n$, 

\item[(G3)] $(1 i j k)$ with $2 \le i < k < j \le n$. 
\end{itemize}
Therefore 
$\Gamma_n^4$ is also generated by 
$N_n=\displaystyle\frac{(n-3)(n^2+2)}{6}=
\begin{pmatrix} n \\ 3\end{pmatrix}-1$ elements. 
\end{thm}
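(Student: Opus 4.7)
The plan is to transplant the proof of Theorem \ref{thm:generating_gammahat} to $\Gamma_n^4$ without modification. The key preliminary observation is that the signed pentagon relation $(3)'$ and the signed dihedral relation $(4)'$ of Definition \ref{def:gammahat} both already hold in $\Gamma_n^4$: the involutive relation $(ijkm)^2 = 1$ lets one replace $(ijkm)$ by $(ijkm)^{-1}$ in the pentagon relation $(3)$ to recover $(3)'$, and similarly $(jkli)^2 = 1$ and $(lkji)^2 = 1$ convert $(4)$ into $(4)'$. In particular the natural projection $\widehat{\Gamma_n^4} \twoheadrightarrow \Gamma_n^4$ sends the generating set $\Lambda$ of $\widehat{\Gamma_n^4}$ onto the set $\Lambda$ in the present statement.

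With this in hand, I would note that every manipulation in the proof of Theorem \ref{thm:generating_gammahat} is built only from the group axioms together with relations $(2)$, $(3)'$ and $(4)'$, all of which are available in $\Gamma_n^4$. Consequently the same three-case analysis --- starting from the generating set $\Upsilon = \{(1abc) \mid a < c\}$ and then rewriting elements of Types (I), (II), (III) as words in $\Lambda$ via signed pentagon relations on the $5$-subsets $\{1,i,j,k,2\}$, $\{i,2,k,j,1\}$, $\{1,2,j,k,3\}$, $\{2,1,j,k,3\}$, and their analogues --- carries over verbatim.

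The main (and only) thing to verify is that no step of the original argument secretly depends on a feature of $\widehat{\Gamma_n^4}$ not shared by $\Gamma_n^4$, such as an implicit use of infinite order or nontrivial inversion. Inspection of the three displayed computations in the proof of Theorem \ref{thm:generating_gammahat} shows that each identity is a formal consequence of signed pentagon and signed dihedral relations alone, so this worry does not materialize. Since the cardinality of $\Lambda$ is unchanged, the conclusion that $\Gamma_n^4$ is generated by the $N_n = \begin{pmatrix} n \\ 3 \end{pmatrix} - 1$ elements of $\Lambda$ follows immediately.
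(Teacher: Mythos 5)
Your proposal is correct and matches the paper's proof, which simply states that the argument for Theorem \ref{thm:generating_gammahat} applies word-for-word to $\Gamma_n^4$; your preliminary observation that the involutive relations turn $(3)$ and $(4)$ into $(3)'$ and $(4)'$ is exactly the justification needed. Indeed, your remark that the surjection $\widehat{\Gamma_n^4} \twoheadrightarrow \Gamma_n^4$ carries $\Lambda$ onto $\Lambda$ already yields the statement in one line, since the image of a generating set under a surjective homomorphism generates the target.
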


Theorem \ref{thm:generating_gamma} and the involutive relation $(1)$ of $\Gamma_n^4$ say 
that there exists a  surjective homomorphism 
\[(\mathbb{Z}/2 \mathbb{Z})^{\ast N_n} \twoheadrightarrow \Gamma_n^4\]
for $n \ge 4$. Passing to their abelianizations, we have a surjective homomorphism 
\[(\mathbb{Z}/2\mathbb{Z})^{N_n} \twoheadrightarrow  H_1 (\Gamma_n^4).\]
We now see that the last surjection is an isomorphism. 
For that we use the homomorphisms $\Phi_3$ and $\Phi_2$ defined in the previous section. 
Note that as we see below only $\Phi_3$ does not suffice. 

Let $(\mathbb{Z}/2\mathbb{Z}) [n]_k$ denote 
the $(\mathbb{Z}/2\mathbb{Z})$-vector space based by the set $[n]_k$.  
The homomorphisms
\[\Phi_3 \otimes (\mathbb{Z}/2\mathbb{Z}) \colon \widehat{\Gamma_n^4} 
\longrightarrow (\mathbb{Z}/2\mathbb{Z}) [n]_3, \quad 
\Phi_2 \otimes (\mathbb{Z}/2\mathbb{Z}) \colon \widehat{\Gamma_n^4} 
\longrightarrow (\mathbb{Z}/2\mathbb{Z}) [n]_2\]
factor through $\Gamma_n^4$ and define the homomorphisms
\begin{align*}
&\Phi_3^{(2)} \colon \Gamma_n^4 \longrightarrow (\mathbb{Z}/2\mathbb{Z}) [n]_3, \quad 
\Phi_3^{(2)} ((ijkl)) = \{i, j, k\} + \{i, j, l\}+\{i,k,l\} + \{j, k, l\}, \\
&\Phi_2^{(2)} \colon \Gamma_n^4 \longrightarrow (\mathbb{Z}/2\mathbb{Z}) [n]_2, \quad 
\Phi_2^{(2)} ((ijkl)) = \{i, k\} + \{j, l\}. 
\end{align*}

\begin{thm}\label{thm:abelianization_gamma}
For $n \ge 4$, the image of the homomorphism 
\[\Phi_3^{(2)} \oplus \Phi_2^{(2)} \colon 
\Gamma_n^4 \longrightarrow (\mathbb{Z}/2\mathbb{Z}) [n]_3 \oplus 
(\mathbb{Z}/2\mathbb{Z}) [n]_2 \cong 
(\mathbb{Z}/2\mathbb{Z})^{\binom{n}{3}+\binom{n}{2}}\]
is isomorphic to $(\mathbb{Z}/2\mathbb{Z})^{N_n}$. 
Combining with Theorem $\ref{thm:generating_gamma}$, 
we have 
\[H_1 (\Gamma_n^4) \cong (\mathbb{Z}/2\mathbb{Z})^{N_n}.\]
\end{thm}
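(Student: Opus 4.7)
The plan is to combine the upper bound furnished by Theorem \ref{thm:generating_gamma} with a matching lower bound obtained by evaluating $\Phi_3^{(2)} \oplus \Phi_2^{(2)}$ on a carefully chosen family of $N_n$ elements, patterned after the proof of Theorem \ref{thm:abelianization_gammahat}. Theorem \ref{thm:generating_gamma} together with the involutive relation yields surjections $(\mathbb{Z}/2\mathbb{Z})^{\ast N_n} \twoheadrightarrow \Gamma_n^4$ and hence $(\mathbb{Z}/2\mathbb{Z})^{N_n} \twoheadrightarrow H_1(\Gamma_n^4)$. Since both $\Phi_3^{(2)}$ and $\Phi_2^{(2)}$ factor through $H_1(\Gamma_n^4)$, the image of $\Phi_3^{(2)} \oplus \Phi_2^{(2)}$ has dimension at most $N_n$ over $\mathbb{Z}/2\mathbb{Z}$, and it remains to produce $N_n$ elements whose combined images are linearly independent.

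I would reuse the three families that arise implicitly in the proof of Theorem \ref{thm:abelianization_gammahat}: (I) the single generators $(ijkn)$ for $1 \le i < j < k \le n-1$, of count $\binom{n-1}{3}$; (II) the products $(i(n-1)jn)(ij(n-1)n)$ for $1 \le i < j \le n-2$, of count $\binom{n-2}{2}$; (III) the products $(i(n-2)(n-1)n)(i(n-2)n(n-1))$ for $1 \le i \le n-3$, of count $n-3$. A direct count gives $\binom{n-1}{3} + \binom{n-2}{2} + (n-3) = N_n$. Over $\mathbb{Z}$, the $\Phi_3$-images of family (I) have leading coefficient $1$ in the lex order on $[n]_3$, while those of families (II) and (III) equal $2\{i,j,n\}-2\{i,n-1,n\}$ and $2\{i,n-1,n\}-2\{n-2,n-1,n\}$ respectively. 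Reducing modulo $2$, family (I) retains its lex-triangular form and so contributes $\binom{n-1}{3}$ linearly independent vectors to the $\Phi_3^{(2)}$-component, whereas the $\Phi_3^{(2)}$-images of families (II) and (III) vanish.

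The remaining linear independence must then be detected by $\Phi_2^{(2)}$, whose values on families (II) and (III) are
\begin{align*}
\Phi_2^{(2)}\bigl((i(n-1)jn)(ij(n-1)n)\bigr) &= \{i,j\} + \{i,n-1\} + \{j,n\} + \{n-1,n\}, \\
\Phi_2^{(2)}\bigl((i(n-2)(n-1)n)(i(n-2)n(n-1))\bigr) &= \{i,n-1\} + \{i,n\} + \{n-2,n-1\} + \{n-2,n\}.
\end{align*}
I would establish independence of these $\binom{n-2}{2}+(n-3)=\binom{n-1}{2}-1$ vectors in $(\mathbb{Z}/2\mathbb{Z})[n]_2$ by a two-step read-off: the basis element $\{i,j\}$ with $i<j\le n-2$ appears in exactly one family (II) vector and in no family (III) vector, pinning down all family (II) coefficients to zero; afterwards the basis element $\{i,n-1\}$ with $i \le n-3$ appears in exactly one remaining family (III) vector, pinning down the (III) coefficients.

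The main obstacle is precisely why the extra invariant $\Phi_2^{(2)}$ must supplement $\Phi_3^{(2)}$: the $\binom{n-1}{2}-1$ ``leading coefficient $2$'' pieces of the $\widehat{\Gamma_n^4}$-argument disappear modulo $2$, so the finer edge-valued invariant $\Phi_2^{(2)}$ is essential for detecting them, and one must monitor the interaction between the $[n]_3$- and $[n]_2$-components of the target. Once the $N_n$ constructed images are shown independent, the surjection $(\mathbb{Z}/2\mathbb{Z})^{N_n} \twoheadrightarrow H_1(\Gamma_n^4)$ is forced to be an isomorphism and the theorem follows.
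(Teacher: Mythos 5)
Your proposal is correct and follows essentially the same route as the paper: the upper bound comes from Theorem \ref{thm:generating_gamma}, and the matching lower bound uses the identical families $(i(n-1)jn)(ij(n-1)n)$ and $(i(n-2)(n-1)n)(i(n-2)n(n-1))$, which lie in $\Ker \Phi_3^{(2)}$ and are separated by $\Phi_2^{(2)}$ via the same triangular read-off. The only cosmetic difference is in the $[n]_3$-component: you exhibit the explicit triangular family $(ijkn)$ to get $\dim \Im \Phi_3^{(2)} \ge \binom{n-1}{3}$, whereas the paper computes this dimension exactly by identifying $\Phi_3^{(2)}$ with the boundary map $\partial_3$ of the acyclic mod-$2$ simplicial chain complex of an $(n-1)$-simplex.
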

\begin{proof}
We first show that the image of $\Phi_3^{(2)}$ alone is isomorphic to 
$(\mathbb{Z}/2\mathbb{Z})^{\binom{n-1}{3}}$, 
which is smaller than $(\mathbb{Z}/2\mathbb{Z})^{N_n}$. 
For that we identify the image of $\Phi_3^{(2)}$ with that of the boundary map 
$\partial_3 \colon C_3 \to C_2$ of 
the simplicial chain complex $\{C_\ast, \partial_\ast\}$ with coefficients in $\mathbb{Z}/2\mathbb{Z}$ 
of an $(n-1)$-simplex, whose vertices are numbered $1,2,\ldots, n$. 
We have $C_k = (\mathbb{Z}/2\mathbb{Z}) [n]_{k+1}$. 
The chain complex 
\[\cdots \longrightarrow C_3 \stackrel{\partial_3}{\longrightarrow} 
C_2 \stackrel{\partial_2}{\longrightarrow} C_1 \stackrel{\partial_1}{\longrightarrow} C_0 
\stackrel{\varepsilon}{\longrightarrow}  \mathbb{Z}/2\mathbb{Z} \longrightarrow 0\]
 is known to be acyclic. Then we have 
 \begin{align*}
 \dim_{\mathbb{Z}/2\mathbb{Z}} \Im \Phi_3^{(2)} 
 &= \dim_{\mathbb{Z}/2\mathbb{Z}} \Im \partial_3 
 = \dim_{\mathbb{Z}/2\mathbb{Z}} \Ker \partial_2\\
 &= \dim_{\mathbb{Z}/2\mathbb{Z}} C_2 - \dim_{\mathbb{Z}/2\mathbb{Z}} \Im \partial_2
 =\binom{n}{3}- \dim_{\mathbb{Z}/2\mathbb{Z}} \Ker \partial_1\\
 &=\binom{n}{3}-(\dim_{\mathbb{Z}/2\mathbb{Z}} C_1-\dim_{\mathbb{Z}/2\mathbb{Z}} \Im \partial_1)
 =\binom{n}{3}- \binom{n}{2} + \dim_{\mathbb{Z}/2\mathbb{Z}} \Ker \varepsilon\\
 &=\binom{n}{3}-\binom{n}{2}+(n-1)= \frac{(n-1)(n-2)(n-3)}{6}= \binom{n-1}{3}. 
 \end{align*}
 
 Next we find the remaining $N_n- \displaystyle\binom{n-1}{3}=\displaystyle\frac{n (n-3)}{2}$ dimensional 
$ (\mathbb{Z}/2\mathbb{Z})$-vector space from $\Phi_2^{(2)}$. More precisely, 
we see that $\dim_{\mathbb{Z}/2\mathbb{Z}} 
\Phi_2^{(2)} \left(\Ker \Phi_3^{(2)}\right)$ is at least $\displaystyle\frac{n (n-3)}{2}$. 

We endow the basis $[n]_2$ of $ (\mathbb{Z}/2\mathbb{Z}) [n]_2$ 
with the lexicographic order $\prec$ after writing each element of $[n]_2$ in the form 
$\{i,j\}$ with $i>j$. That is, 
\[\{2,1\} \prec \{3,1\} \prec \{3,2\} \cdots \prec \{n, n-1\}.\]
By this total order, we regard  $[n]_2$ as an ordered basis of $ (\mathbb{Z}/2\mathbb{Z})[n]_2$. 
Let us show that for each $\{i,j\} \in [n]_2$ satisfying $\{i,j\} \prec \{n-1, n-2\}$ and 
$\{i,j\} \neq \{n-1, n-2\}$, there exists 
an element $w$ of $\Ker \Phi_3^{(2)}$ satisfying
\begin{itemize}
\item \ The coefficient of $\{i, j\}$ in $\Phi_2^{(2)} (w)$ is $1$,  
\item \ If $\{i', j'\} \prec \{i, j\}$ and $\{i', j'\} \neq \{i, j\}$, 
% with respect to the order $\prec$, 
the coefficient of $\{i', j'\}$ in $\Phi_2^{(2)} (w)$ is zero. 
\end{itemize}

For $\{i,j\}$ with $1 \le j< i \le n-2$, we have 
\[\Phi_2^{(2)} ((i (n-1) j n)(i j (n-1) n))=\{i,j\} + \{n,n-1\} + \{n-1,i\} + \{n,j\}.\]
Hence we may take $w=(i (n-1) jn)(i j (n-1) n) \in \Ker \Phi_3^{(2)}$. 

For $\{n-1, j\}$ with $1 \le j \le n-3$, we have 
 \[\Phi_2^{(2)} ((j (n-2) (n-1) n)(j (n-2) n (n-1)))=\{n-1, j\} + \{n,n-2\} + \{n,j\} + \{n-1,n-2\}.\]
Hence we may take $w=(j (n-2) (n-1) n)(j (n-2) n (n-1)) \in \Ker \Phi_3^{(2)}$. 

From the above, we see that $\Phi_2 \left(\Ker \Phi_3^{(2)}\right)$ is at least 
$\displaystyle\binom{n-2}{2}+(n-3)=\displaystyle\frac{n (n-3)}{2}$ dimensional. 
This is what we want to show and we finish the proof. 
\end{proof}

Theorem \ref{thm:generating_gamma} gives an upper bound of 
the minimum number of generators of $\Gamma_n^4$  
while Theorem \ref{thm:abelianization_gamma} gives a lower bound. 
Since they coincide, we have the following. 
\begin{cor}\label{cor:minimalgen}
For $n \ge 4$, the group $\Gamma_n^4$ needs 
$N_n=\begin{pmatrix} n \\ 3\end{pmatrix}-1$ elements to generate. 
The set $\Lambda$ in Theorem $\ref{thm:generating_gamma}$ 
is a minimal generating set of $\Gamma_n^4$.
\end{cor}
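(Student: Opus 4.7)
The plan is to deduce the corollary as a direct consequence of the two theorems that precede it, via the standard principle that the minimum number of generators cannot decrease when passing to the abelianization. The upper bound is immediate: Theorem \ref{thm:generating_gamma} exhibits $\Lambda$ as a generating set of $\Gamma_n^4$ of cardinality $N_n$, so any minimal generating set has size at most $N_n$.

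For the matching lower bound I would argue as follows. Suppose $\Gamma_n^4$ is generated by some set $S$ of cardinality $k$. Then the composition
\[
F_k \twoheadrightarrow \Gamma_n^4 \twoheadrightarrow H_1(\Gamma_n^4)
\]
is surjective, and factors through the abelianization $\mathbb{Z}^k$ of $F_k$. Hence $H_1(\Gamma_n^4)$ is generated by $k$ elements as an abelian group. By Theorem \ref{thm:abelianization_gamma}, $H_1(\Gamma_n^4) \cong (\mathbb{Z}/2\mathbb{Z})^{N_n}$, which is an $N_n$-dimensional vector space over $\mathbb{F}_2$; any generating set of such a vector space contains a basis and therefore has at least $N_n$ elements. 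So $k \geq N_n$, giving the required lower bound.

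Combining the two bounds yields $k = N_n$ for every minimal generating set of $\Gamma_n^4$, and the set $\Lambda$ of Theorem \ref{thm:generating_gamma} achieves this bound, hence is minimal. There is no genuine obstacle here since the substantive content lives entirely in Theorems \ref{thm:generating_gamma} and \ref{thm:abelianization_gamma}; the only point requiring a moment of care is to observe that the minimum number of generators of $(\mathbb{Z}/2\mathbb{Z})^{N_n}$ as an abelian group coincides with its $\mathbb{F}_2$-dimension, which is ensured by the fact that the group has exponent $2$ so every generating set is automatically an $\mathbb{F}_2$-spanning set.
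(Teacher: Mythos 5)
Your proposal is correct and matches the paper's argument exactly: the paper also derives the corollary by combining the upper bound from the explicit generating set $\Lambda$ of Theorem \ref{thm:generating_gamma} with the lower bound coming from $H_1(\Gamma_n^4)\cong(\mathbb{Z}/2\mathbb{Z})^{N_n}$ in Theorem \ref{thm:abelianization_gamma}. You have merely spelled out the standard abelianization argument that the paper leaves implicit.
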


\begin{remark}\label{rem:notdirectsummand}
As seen in the proof of Theorem \ref{thm:abelianization_gamma}, 
we have 
\[(\Phi_3 \otimes (\mathbb{Z}/2\mathbb{Z}))(\widehat{\Gamma_n^4})=\Phi_3^{(2)} (\Gamma_n^4) 
\cong (\mathbb{Z}/2\mathbb{Z})^{\binom{n-1}{3}}.\]
On the other hand, we saw in Theorem \ref{thm:abelianization_gammahat} that 
$\Phi_3 (\widehat{\Gamma_n^4}) \cong \mathbb{Z}^{\binom{n}{3}-1}$. 
Since $\displaystyle\binom{n}{3}-1 > \displaystyle\binom{n-1}{3}$ for $n \ge 4$, we see 
that $\Phi_3 (\widehat{\Gamma_n^4}) \subset \mathbb{Z} [n]_3$ is not a direct summand. 
\end{remark}

\section{The complex abelianization of $\widehat{\Gamma_n^4}$ 
as a representation of the symmetric group}\label{sec:rep}
We give a conceptually easier proof of 
$\Phi_3 (\widehat{\Gamma_n^4}) \cong \mathbb{Z}^{N_n}$ 
in Theorem \ref{thm:abelianization_gammahat} 
by using the representation theory of symmetric groups. 
We assume $n \ge 5$. 

By definition, we have a natural action of the symmetric group $\mathfrak{S}_n$ of degree $n$ on 
the group $\widehat{\Gamma_n^4}$, $\Gamma_n^4$ and also the set $[n]_k$. 
It is clear that the homomorphisms $\Phi_3$, $\Phi_2$, $\Phi_3^{(2)}$ and $\Phi_2^{(2)}$ 
are all $\mathfrak{S}_n$-equivariant. 

We now consider the complexified version of $\Phi_3$. It is given by 
\[\Phi_3^{\mathbb{C}} \colon H_1 (\widehat{\Gamma_n^4};\mathbb{C}) \longrightarrow \mathbb{C} [n]_3,\]
where $\mathbb{C} [n]_k$ denotes the $\mathbb{C}$-vector space based by $[n]_k$. 
To show that $\Phi_3 (\widehat{\Gamma_n^4}) \cong \mathbb{Z}^{N_n}$, 
it suffices to see that $\Im \Phi_3^\mathbb{C} \cong \mathbb{C}^{N_n}$. 
The map $\Phi_3^{\mathbb{C}}$ is an $\mathfrak{S}_n$-equivariant linear map, 
so that the image $\Im \Phi_3^\mathbb{C}$ is described 
in terms of representations of $\mathfrak{S}_n$. 

For generalities of the representation theory of 
$\mathfrak{S}_n$, we refer to the book Fulton-Harris \cite{FH}. 
The irreducible complex representations of $\mathfrak{S}_n$ are parametrized by the Young diagrams 
consisting of $n$ boxes. We use the standard notation $[n_1, n_2, \ldots, n_k]$ to denote a Young diagram 
where $n_1+n_2+ \cdots +n_k=n$ and $n_1 \ge n_2 \ge \cdots \ge n_k \ge 1$. 
We denote by $V_{[n_1, n_2, \ldots, n_k]}$ the corresponding representation space.   
It is known that the trivial one dimensional representation $\mathbb{C}=\mathbb{C}[n]_0$ 
corresponds to $V_{[n]}$ and the natural permutation action of $\mathfrak{S}_n$ on 
$\mathbb{C}^n=\mathbb{C}[n]_1$ gives the representation having 
the irreducible decomposition $\mathbb{C}[n]_1 = V_{[n]} \oplus V_{[n-1,1]}$. 
\begin{lem}\label{lem:irred}
For $n \ge 6$, we have $\mathfrak{S}_n$-irreducible decompositions 
\begin{align*}
\mathbb{C}[n]_2 &=V_{[n]} \oplus V_{[n-1,1]} \oplus V_{[n-2,2]},\\
\mathbb{C}[n]_3 &=V_{[n]} \oplus V_{[n-1,1]} \oplus V_{[n-2,2]} \oplus V_{[n-3,3]}.
\end{align*}
When $n=5$, we have $\mathbb{C}[5]_2=\mathbb{C}[5]_3= V_{[5]} \oplus V_{[4,1]} \oplus V_{[3,2]}$. 
\end{lem}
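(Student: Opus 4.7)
The plan is to recognize $\mathbb{C}[n]_k$ as a Young permutation module of $\mathfrak{S}_n$ and invoke Young's rule. Since $\mathfrak{S}_n$ acts transitively on $[n]_k$ with the stabilizer of the subset $\{1, \ldots, k\}$ equal to $\mathfrak{S}_k \times \mathfrak{S}_{n-k}$, we have an $\mathfrak{S}_n$-isomorphism
$$\mathbb{C}[n]_k \;\cong\; \mathrm{Ind}_{\mathfrak{S}_{n-k}\times \mathfrak{S}_{k}}^{\mathfrak{S}_n}(\mathbf{1}) \;=\; M^{(n-k,k)}$$
whenever $k \le n-k$; this is the Young permutation module associated with the two-row partition $(n-k, k)$.

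Next I would recall Young's rule: $M^{\lambda} \cong \bigoplus_{\mu \trianglerighteq \lambda} K_{\mu \lambda}\, V_{\mu}$, where $K_{\mu \lambda}$ is the Kostka number counting semistandard Young tableaux of shape $\mu$ and content $\lambda$. For the two-row $\lambda = (n-s, s)$ with $s \le n/2$, the partitions $\mu$ dominating $\lambda$ are exactly the two-row shapes $(n-j, j)$ for $0 \le j \le s$, and each Kostka number $K_{(n-j,j),(n-s,s)}$ equals $1$ by a direct tableaux count. Hence $M^{(n-s,s)} \cong \bigoplus_{j=0}^{s} V_{[n-j,j]}$.

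Applying this with $s = 2$ (any $n \ge 4$) and $s = 3$ (any $n \ge 6$, so that $n-3 \ge 3$) immediately yields the two decompositions of $\mathbb{C}[n]_2$ and $\mathbb{C}[n]_3$ stated for $n \ge 6$. For the remaining case $n = 5,\ k = 3$, the shape $(n-3, 3) = (2, 3)$ is not a valid Young diagram, so I would instead exploit the $\mathfrak{S}_5$-equivariant complementation bijection $[5]_3 \xrightarrow{\sim} [5]_2$ (equivalently, apply Young's rule to $M^{(3,2)}$, noting that the stabilizer $\mathfrak{S}_3 \times \mathfrak{S}_2$ is the same subgroup read the other way) to conclude $\mathbb{C}[5]_3 \cong \mathbb{C}[5]_2 = V_{[5]} \oplus V_{[4,1]} \oplus V_{[3,2]}$.

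The lemma is a textbook application of the representation theory of $\mathfrak{S}_n$, so there is no essential obstacle; the only mild subtlety is handling the boundary case $n = 5$ separately so that the diagram $[n-3, 3]$ remains a legitimate Young diagram, which is exactly why the statement of the lemma splits into two cases.
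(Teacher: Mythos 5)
Your proof is correct, but it takes a genuinely different route from the paper. The paper never mentions Young's rule: it computes the permutation character $\chi_k$ of $\mathbb{C}[n]_k$ by counting fixed subsets of a permutation with cycle type $C_i$ (obtaining $\chi_2(C_i)=i_2+\binom{i_1}{2}$ and $\chi_3(C_i)=i_3+i_1i_2+\binom{i_1}{3}$), computes the characters of $V_{[n]}$, $V_{[n-1,1]}$, $V_{[n-2,2]}$, $V_{[n-3,3]}$ from the Frobenius character formula, and checks that the two sides agree class by class. You instead identify $\mathbb{C}[n]_k$ with the Young permutation module $M^{(n-k,k)}=\mathrm{Ind}_{\mathfrak{S}_k\times\mathfrak{S}_{n-k}}^{\mathfrak{S}_n}(\mathbf{1})$ and apply Young's rule, noting that for a two-row content $(n-s,s)$ the only dominating shapes are $(n-j,j)$ with $j\le s$ and that each Kostka number is $1$; the $n=5$ case is handled by the equivariant complementation bijection $[5]_3\cong[5]_2$ (the paper absorbs this case into the same character check, observing that $\chi_{[n-3,3]}\equiv 0$ when $n=5$). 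Both arguments are complete. Yours is shorter, multiplicity-freeness falls out of the Kostka computation rather than a verified character identity, and it generalizes at no cost to $\mathbb{C}[n]_k\cong\bigoplus_{j=0}^{k}V_{[n-j,j]}$ for all $k\le n/2$; the paper's computation is more elementary in the sense that it only uses character values and does not presuppose Young's rule, which is presumably why the authors chose it after remarking they could not locate a reference for the decomposition itself.
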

\begin{proof}
The authors guess that these decompositions are well-known. However they could not find 
a reference, so that we here give a brief proof. We check the characters of these representations. 
Recall that the character $\chi_\rho$ of a representation $\rho \colon \mathfrak{S}_n \to GL(V)$ 
is the function 
\[\chi_\rho \colon \mathfrak{S}_n / \mathrm{conjugate} \longrightarrow \mathbb{C}, 
\qquad \chi_\rho ([\sigma])= \mathrm{Tr} (\rho (\sigma)),\]
where $[\sigma]$ is the conjugacy class of an element $\sigma \in \mathfrak{S}_n$. 
By considering the usual decomposition of an element $\sigma \in \mathfrak{S}_n$ 
into the product of cyclic permutations, the conjugacy classes of $\mathfrak{S}_n$ has 
the one-to-one correspondence with the Young diagrams of $n$ boxes.

Let us compute the character $\chi_k$ of $\mathbb{C}[n]_k$ for $k=2, 3$. 
Since the action of $\mathfrak{S}_n$ on $\mathbb{C}[n]_k$ is given by permutations of 
the basis, it suffices to consider the number of fixed points for our computation. 

Let $C_i$ be the conjugacy class of an element $\sigma \in \mathfrak{S}_n$ having 
$i_k$ cyclic permutations of length $k$. 
%Note that number $i_1$ is just that of the fixed points of the permutation of $[n]$ by $\sigma$. 
%and $i_2$ cyclic permutations of length $2$. 
Then we have 
\[\chi_2 (C_i)=i_2 + \binom{i_1}{2}.\]
To explain this formula, let us consider the case $n=8$ and $\sigma=(123)(45)(6)(7)(8)$. 
The fixed points of $\sigma$ are given by 
\begin{itemize}
\item $\{4,5\}$, where $\sigma$ exchanges $4$ and $5$, 
\item $\{6,7\}, \{7,8\}, \{6,8\}$, where $\sigma$ fixes each element of these subsets. 
\end{itemize}
In general, the fixed points in $[n]_2$ are obtained from these patterns. 
By a consideration similar to the above, we have 
\[\chi_3 (C_i)=i_3 + i_1 i_2 + \binom{i_1}{3}.\]

On the other hand, we may compute the character $\chi_{[n_1, n_2, \ldots, n_k]}$ of 
the irreducible representation $V_{[n_1, n_2, \ldots, n_k]}$ of $\mathfrak{S}_n$ by 
the Frobenius character formula. We have 
\begin{align*}
&\chi_{[n]} (C_i)=1,\\
&\chi_{[n-1,1]} (C_i)=i_1-1,\\
&\chi_{[n-2,2]} (C_i)=i_2 +\frac{i_1 (i_1-3)}{2},\\
&\chi_{[n-3,3]} (C_i)=i_3+i_2 (i_1-1) + \binom{i_1}{3}-\binom{i_1}{2}.
\end{align*}
Now it is easy to see that 
\begin{align*}
\chi_2 (C_i)&=\chi_{[n]} (C_i) + \chi_{[n-1,1]} (C_i) +\chi_{[n-2,2]} (C_i),\\
\chi_3 (C_i)&=\chi_{[n]} (C_i) + \chi_{[n-1,1]} (C_i) +\chi_{[n-2,2]} (C_i)+\chi_{[n-3,3]} (C_i)
\end{align*}
hold for any $C_i \in \mathfrak{S}_n / \mathrm{conjugate}$. 
The desired irreducible decompositions follow from these, 
since finite dimensional representations of $\mathfrak{S}_n$ 
are characterized by their characters. Note that 
we have $\chi_{[n-3,3]} (C_i)\equiv 0$ when $n=5$, so we need to omit this term. 
\end{proof}

\begin{lem}\label{lem:irred2}
For $n \ge 6$, we have $\Im \Phi_3^{\mathbb{C}} = V_{[n-1,1]} \oplus V_{[n-2,2]} \oplus V_{[n-3,3]}$ as 
$\mathfrak{S}_n$-irreducible decompositions. 
When $n=5$, we have $\Im \Phi_3^{\mathbb{C}}=\Im \Phi_2^{\mathbb{C}}=V_{[4,1]} \oplus V_{[3,2]}$. 
\end{lem}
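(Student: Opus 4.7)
The plan is to identify $\Im \Phi_3^{\mathbb{C}}$ with the $\mathfrak{S}_n$-submodule $M$ of $\mathbb{C}[n]_3$ generated by the single element $v := \Phi_3((1234)) = \{1,2,3\} - \{1,2,4\} + \{1,3,4\} - \{2,3,4\}$, and then to prove $M = \ker \varepsilon_3$ by a duality argument, where $\varepsilon_3 \colon \mathbb{C}[n]_3 \to \mathbb{C}$ is the augmentation $\{i,j,k\} \mapsto 1$. Since $\varepsilon_3(v) = 0$, I immediately have $M \subseteq \ker \varepsilon_3$; and since $\varepsilon_3$ is $\mathfrak{S}_n$-equivariant with target the trivial representation $V_{[n]}$, Schur's lemma combined with Lemma \ref{lem:irred} identifies $\ker \varepsilon_3$ with $V_{[n-1,1]} \oplus V_{[n-2,2]} \oplus V_{[n-3,3]}$ for $n \geq 6$ and with $V_{[4,1]} \oplus V_{[3,2]}$ for $n = 5$. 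Thus only the reverse inclusion $M \supseteq \ker \varepsilon_3$ remains to be shown.

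For that reverse inclusion I would pass to the orthogonal complement with respect to the $\mathfrak{S}_n$-invariant pairing on $\mathbb{C}[n]_3$ making $[n]_3$ orthonormal, aiming to show $M^\perp \subseteq V_{[n]}$. A vector $f = \sum_T c_T T$ lies in $M^\perp$ iff $\langle f, \sigma v\rangle = 0$ for every $\sigma \in \mathfrak{S}_n$; writing $\sigma v = \Phi_3((\sigma(1)\sigma(2)\sigma(3)\sigma(4)))$ unpacks this to the system
\[c_{\{a,b,c\}} - c_{\{a,b,d\}} + c_{\{a,c,d\}} - c_{\{b,c,d\}} = 0 \qquad (\text{distinct } a,b,c,d \in [n]).\]
Applying this relation at $(a,b,c,d)$ and at $(b,a,c,d)$ and taking sum and difference yields $c_{\{a,b,c\}} = c_{\{a,b,d\}}$ and $c_{\{a,c,d\}} = c_{\{b,c,d\}}$. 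Consequently $c_T = c_{T'}$ whenever $|T \cap T'| = 2$; since for $n \geq 5$ any two 3-subsets of $[n]$ are joined by a chain of such single-element swaps, $c_T$ is constant in $T$, so $f$ is a scalar multiple of $\sum_T T \in V_{[n]}$. This gives $M^\perp = V_{[n]}$ and hence $M = \ker \varepsilon_3$, completing the claimed description of $\Im \Phi_3^{\mathbb{C}}$.

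The remaining claim about $\Im \Phi_2^{\mathbb{C}}$ when $n = 5$ is immediate from Remark \ref{rem:Phi2}, which identifies $\Im \Phi_2^{\mathbb{C}}$ with $\ker \varepsilon_2 \subseteq \mathbb{C}[5]_2$; Lemma \ref{lem:irred} then decomposes this kernel as $V_{[4,1]} \oplus V_{[3,2]}$. The only step requiring care is the duality reduction above; its key observation is that the two orderings $(a,b,c,d)$ and $(b,a,c,d)$ decouple the defining system into the pair of single-swap equalities, which is what rules out every proper subrepresentation of $\ker \varepsilon_3$.
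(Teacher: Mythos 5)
Your proof is correct, but it takes a genuinely different route from the paper. The paper first deduces $V_{[n-1,1]} \oplus V_{[n-2,2]} \subseteq \Im \Phi_3^{\mathbb{C}}$ from the factorization $2\Phi_2 = \eta_3 \circ \Phi_3$ together with the explicit description of $\Im \Phi_2$ in Remark \ref{rem:Phi2}, then detects the last constituent $V_{[n-3,3]}$ by exhibiting two concrete products of generators (such as $(1324)(3546)(5162)$) that lie in $\Ker \Phi_2^{\mathbb{C}}$ but have nonzero image under $\Phi_3^{\mathbb{C}}$, and closes with a dimension count. You instead observe that $\Im \Phi_3^{\mathbb{C}}$ is the cyclic $\mathfrak{S}_n$-module generated by $v = \Phi_3((1234))$ and compute its orthogonal complement directly: the ``sum and difference of the $(a,b,c,d)$ and $(b,a,c,d)$ relations'' step correctly decouples the system into single-swap equalities $c_{\{a,b,c\}} = c_{\{a,b,d\}}$, and connectivity of the Johnson graph $J(n,3)$ forces $M^\perp$ to be the line of constant vectors, whence $\Im \Phi_3^{\mathbb{C}} = \Ker \varepsilon_3$. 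Your approach is more self-contained and elementary --- the identification of the image (and hence its dimension $\binom{n}{3}-1 = N_n$) is pure linear algebra, with Lemma \ref{lem:irred} entering only to name the irreducible constituents of $\Ker\varepsilon_3$; it also cleanly rules out a $V_{[n]}$ summand, a point the paper's dimension count treats only implicitly. What the paper's argument buys in exchange is the display of explicit elements of $\widehat{\Gamma_n^4}$ realizing each irreducible piece, in keeping with the representation-theoretic interpretation the section is after, and the reuse of the $\Phi_2$ computation (which you still need, as you note, for the $n=5$ statement about $\Im\Phi_2^{\mathbb{C}}$). Both arguments are valid; yours is arguably the cleaner proof of the equality $\Im\Phi_3^{\mathbb{C}} \cong \mathbb{C}^{N_n}$ itself.
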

\begin{proof}
From a computation in Remark \ref{rem:Phi2}, the cokernel of 
the complexified version 
\[\Phi_2^{\mathbb{C}} \colon 
H_1 (\widehat{\Gamma_n^4};\mathbb{C}) \to \mathbb{C} [n]_2\]
of $\Phi_2$ is one dimensional and corresponds to $V_{[n]} \subset \mathbb{C}[n]_2$. Hence 
\[\Im \Phi_2^{\mathbb{C}}=V_{[n-1,1]} \oplus V_{[n-2,2]}.\]
Since $\Phi_2^{\mathbb{C}}=(\eta_3 \otimes \mathbb{C}) \circ \Phi_3^{\mathbb{C}}$, 
we see that $\Im \Phi_3^{\mathbb{C}}$ includes $V_{[n-1,1]} \oplus V_{[n-2,2]}$. 
By the hook length formula, we have 
$\dim V_{[n-1,1]}=n-1$ and $\dim V_{[n-2,2]}=\displaystyle\frac{n(n-3)}{2}$. 
When $n=5$, we have done since $\dim V_{[4,1]}+\dim V_{[3,2]}=9=N_5$. 
When $n \ge 6$, we have 
\begin{align*}
\Phi_3^{\mathbb{C}} ((1324)(3546)(5162)) &=1 \cdot \{1,2,3\}+0 \cdot \{1,3,5\}+ \cdots \neq 0, \\
\Phi_2^{\mathbb{C}} ((1324)(3546)(5162)) &=0, \\
\Phi_3^{\mathbb{C}} ((1325)(3456)(4162)) &=1 \cdot \{1,2,3\}-1 \cdot \{1,3,5\}+ \cdots \neq 0,\\
\Phi_2^{\mathbb{C}} ((1325)(3456)(4162)) &=0.
\end{align*}
These equalities imply that $\Im \Phi_3^{\mathbb{C}} \cap \Ker (\eta_3 \otimes \mathbb{C})$ is 
at least $2$-dimensional. Then we see from the irreducible decomposition of $\mathbb{C}[n]_3$ in Lemma \ref{lem:irred} 
that $\Im \Phi_3^{\mathbb{C}}$ includes $V_{[n-3,3]}$. 
Since $\dim V_{[n-3,3]}=\displaystyle\frac{n(n-1)(n-5)}{6}$ and 
$\dim V_{[n-1,1]}+\dim V_{[n-2,2]}+\dim V_{[n-3,3]} = N_n$, we complete the proof.
\end{proof}

Combining Corollary \ref{cor:minimalgen_hat} and Lemma \ref{lem:irred2}, we have the following. 
\begin{thm}\label{thm:H1irred}
For $n \ge 6$, we have an $\mathfrak{S}_n$-irreducible decomposition
\[H_1 (\widehat{\Gamma_n^4};\mathbb{C}) = 
V_{[n-1,1]} \oplus V_{[n-2,2]} \oplus V_{[n-3,3]}.\]
When $n=5$, we have $H_1 (\widehat{\Gamma_5^4};\mathbb{C}) 
= V_{[4,1]} \oplus V_{[3,2]}$. 
\end{thm}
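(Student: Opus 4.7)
The plan is to assemble the theorem from two pieces already in hand: the dimension computation $\dim_{\mathbb{C}} H_1(\widehat{\Gamma_n^4};\mathbb{C}) = N_n$ coming from Theorem \ref{thm:abelianization_gammahat} (equivalently Corollary \ref{cor:minimalgen_hat}), and the explicit identification of $\Im \Phi_3^{\mathbb{C}}$ as an $\mathfrak{S}_n$-subrepresentation of $\mathbb{C}[n]_3$ in Lemma \ref{lem:irred2}. The key observation is that the image described in Lemma \ref{lem:irred2} already has total dimension $N_n$; this forces the $\mathfrak{S}_n$-equivariant linear map $\Phi_3^{\mathbb{C}}$ to be an isomorphism onto its image, and pulling the irreducible decomposition back through this isomorphism yields the claimed decomposition of $H_1(\widehat{\Gamma_n^4};\mathbb{C})$.

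The steps I would carry out are as follows. First, invoke Theorem \ref{thm:abelianization_gammahat} to conclude $H_1(\widehat{\Gamma_n^4}) \cong \mathbb{Z}^{N_n}$, hence after complexification $H_1(\widehat{\Gamma_n^4};\mathbb{C}) \cong \mathbb{C}^{N_n}$. Second, apply the hook length formula to verify
\[\dim V_{[n-1,1]} + \dim V_{[n-2,2]} + \dim V_{[n-3,3]} = (n-1) + \tfrac{n(n-3)}{2} + \tfrac{n(n-1)(n-5)}{6} = N_n\]
for $n \ge 6$, which matches the dimension of $\Im \Phi_3^{\mathbb{C}}$ from Lemma \ref{lem:irred2} with that of the domain. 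Third, since $\Phi_3^{\mathbb{C}}$ is $\mathfrak{S}_n$-equivariant and a surjection onto its image between complex vector spaces of equal dimension $N_n$, it is an $\mathfrak{S}_n$-equivariant isomorphism onto $\Im \Phi_3^{\mathbb{C}}$; transporting the decomposition of the image back along this isomorphism gives the stated decomposition of $H_1(\widehat{\Gamma_n^4};\mathbb{C})$.

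The case $n=5$ is handled analogously: Lemma \ref{lem:irred2} yields $\Im \Phi_3^{\mathbb{C}} = V_{[4,1]} \oplus V_{[3,2]}$ of total dimension $4 + 5 = 9 = N_5$, and the same dimension-matching argument applies. There is really no obstacle here: all substantive content has been absorbed into Lemma \ref{lem:irred2} (which does the character bookkeeping) and Theorem \ref{thm:abelianization_gammahat} (which computes the integral abelianization). The present theorem is a formal packaging, whose only content is the observation that the two independently computed dimensions coincide and hence promote Lemma \ref{lem:irred2} from a description of the image to a description of the domain.
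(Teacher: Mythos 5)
Your proposal is correct and is exactly the paper's argument: the paper proves Theorem \ref{thm:H1irred} by "combining Corollary \ref{cor:minimalgen_hat} and Lemma \ref{lem:irred2}," i.e., the equality $\dim_{\mathbb{C}} H_1(\widehat{\Gamma_n^4};\mathbb{C}) = N_n = \dim \Im \Phi_3^{\mathbb{C}}$ forces the $\mathfrak{S}_n$-equivariant map $\Phi_3^{\mathbb{C}}$ to be an isomorphism onto its image, transporting the decomposition back to the domain. Your write-up just makes explicit the dimension count that the paper leaves implicit in the word "combining."
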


\section{The infiniteness of $\Gamma_5^4$}\label{sec:n5}
\subsection{GAP computation}\label{subsec:GAP}
To see further structures of $\Gamma_n^4$ beyond the abelianization, we may use the program GAP. 
Here we report some results for $\Gamma_5^4$ obtained by GAP computations. 

After inputing a presentation for $\Gamma_5^4$, 
we may use the command ``DerivedSubgroup'' to get the data of the commutator subgroup 
$[\Gamma_5^4, \Gamma_5^4]$. Then the command ``AbelianInvariants'' 
computes $H_1 ([\Gamma_5^4, \Gamma_5^4])$. The result is 
\[H_1 ([\Gamma_5^4, \Gamma_5^4]) \cong \mathbb{Z}^{145} \oplus (\mathbb{Z}/2\mathbb{Z})^{18}.\]
From this we readily have the following. 

\begin{thm}\label{thm:GAP}
The group $\Gamma_5^4$ is an infinite non-commutative group. 
Moreover it does not have Property $(\mathrm{T})$.
\end{thm}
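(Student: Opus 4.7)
The plan is to derive all three claims from the abelianization of the commutator subgroup $[\Gamma_5^4,\Gamma_5^4]$ as computed by GAP.

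First, I would input the presentation of $\Gamma_5^4$ from Definition \ref{def:gamma}. To keep the input size manageable, it is sensible to quotient out first by the dihedral relation (4), cutting the $120$ ordered quadruples down to $15$ dihedral orbits, and then to encode the involutive, commutative, and pentagon relations on these representatives. Invoking \texttt{DerivedSubgroup} then applies a Reidemeister--Schreier procedure to produce a finite presentation of $[\Gamma_5^4,\Gamma_5^4]$; this is legitimate because Theorem \ref{thm:abelianization_gamma} gives $[\Gamma_5^4 : [\Gamma_5^4,\Gamma_5^4]]=2^{9}$, which is finite. Calling \texttt{AbelianInvariants} on that presentation reduces the resulting relation matrix to Smith normal form, yielding $\mathbb{Z}^{145}\oplus (\mathbb{Z}/2\mathbb{Z})^{18}$.

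Second, with this output in hand, the three assertions follow quickly. Infiniteness: the summand $\mathbb{Z}^{145}$ forces $[\Gamma_5^4,\Gamma_5^4]$, and hence $\Gamma_5^4$, to be infinite. Non-commutativity: if $\Gamma_5^4$ were abelian, it would coincide with its abelianization $(\mathbb{Z}/2\mathbb{Z})^{9}$ and thus be finite, contradicting the previous point; equivalently, the derived subgroup is visibly non-trivial. Absence of property $(\mathrm{T})$: since $[\Gamma_5^4,\Gamma_5^4]$ has finite index in $\Gamma_5^4$, property (T) would descend to it, and a (T)-group necessarily has finite abelianization by Kazhdan's theorem; but $H_1([\Gamma_5^4,\Gamma_5^4])$ is infinite by the GAP output, a contradiction.

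The main obstacle I anticipate is purely computational. The defining presentation unwinds into several hundred commutation and pentagon relations, and the Reidemeister--Schreier expansion across $2^{9}$ cosets produces a large relation matrix whose Smith normal form computation is memory-intensive. The practical challenge is therefore to prepare a streamlined presentation (for instance via the minimal generating set of Corollary \ref{cor:minimalgen}, or at least by first collapsing the dihedral orbits) and to verify that GAP's output stabilizes; once the computation terminates, the three claimed properties are immediate structural consequences of $H_1([\Gamma_5^4,\Gamma_5^4])$.
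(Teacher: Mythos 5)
Your proposal is correct and follows essentially the same route as the paper: run GAP's \texttt{DerivedSubgroup} and \texttt{AbelianInvariants} on the presentation to obtain $H_1([\Gamma_5^4,\Gamma_5^4])\cong\mathbb{Z}^{145}\oplus(\mathbb{Z}/2\mathbb{Z})^{18}$, then deduce infiniteness and non-commutativity from the $\mathbb{Z}$-summands and the failure of Property $(\mathrm{T})$ from the fact that the derived subgroup has finite index (by Theorem \ref{thm:abelianization_gamma}) and infinite abelianization. The paper additionally offers a hand-checkable alternative in Section \ref{subsec:RS} via an index-$2$ subgroup, but for this theorem your argument matches the paper's.
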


\begin{proof}
Since $[\Gamma_5^4, \Gamma_5^4]$ is an infinite group, we immediately see that 
$\Gamma_5^4$ is infinite and non-commutative. 

By Theorem \ref{thm:abelianization_gamma}, 
the group $[\Gamma_5^4, \Gamma_5^4]$ is a finite index subgroup of $\Gamma_5^4$. 
The above GAP computation says that the abelianization of $[\Gamma_5^4, \Gamma_5^4]$ 
has a $\mathbb{Z}$-summand. Then by a general fact on Property (T), 
we see that $\Gamma_5^4$ does not have it. 
\end{proof}

\subsection{Proving the infiniteness of $\Gamma_5^4$ by hand}\label{subsec:RS}

We give another proof of Theorem \ref{thm:GAP}, which does not use GAP. 
We see that the group $\Gamma_5^4$ has a subgroup of index $2$ 
having a $\mathbb{Z}$-summand in its abelianization. 

Let us simplify the defining presentation of $\Gamma_5^4$. 
Note that there are no commutative relations when $n=5$. 
We now apply Tietze transformations in order. 
First, we remark that under the involutive relations and the dihedral relations, 
the pentagon relations have a dihedral symmetry. 
Indeed, the left hand side of the pentagon relation 
\[(ijkl)(ijlm)(jklm)(ijkm)(iklm)=1\]
for $\{i,j,k,l,m\}$ is rewritten by shifting the word cyclically to the left twice 
and applying the dihedral relation to 
\[(jklm)(jkmi)(klmi)(jkli)(jlmi)=1, \]
which is the pentagon relation for $\{j,k,l,m,i\}$. Also, taking the inverse of 
the left hand side of the pentagon relation for $\{j,k,l,m,i\}$, we have 
\[(jlmi)(jkli)(klmi)(jkmi)(jklm)=1.\]
We apply the dihedral relation to the left hand side and shift the word cyclically  
to the right once. Then we get 
\[(mlkj)(mlji)(lkji)(mlki)(mkji)=1,\]
which is the pentagon relation for $\{m,l,k,j,i\}$. 
Using this symmetry and dihedral relations (to the underlined parts), 
we may reduce the pentagon relations to the following $5!/10=12$ relations: 
\begin{align*}
(a):  \ \  &(1234)(1245)(2345)(1235)(1345)=1, \quad \text{for $\{1,2,3,4,5\}$,}\\
(b):  \ \  &(1235)(1254)(2354)(1234)(1354)=1, \quad \text{for $\{1,2,3,5,4\}$,}\\
(c):  \ \  &(1243)(1235)(2435)(1245)(1435)=1, \quad \text{for $\{1,2,4,3,5\}$,}\\
(d):  \ \  &(1245)(1253)\underline{(2354)}(1243)\underline{(1354)}=1, \quad \text{for $\{1,2,4,5,3\}$,}\\
(e):  \ \  &(1253)(1234)\underline{(2435)}(1254)\underline{(1435)}=1, \quad \text{for $\{1,2,5,3,4\}$,}\\
(f):  \ \  &(1254)(1243)\underline{(2345)}(1253)\underline{(1345)}=1, \quad \text{for $\{1,2,5,4,3\}$,}\\
(g):  \ \  &(1324)(1345)\underline{(2354)}(1325)(1245)=1, \quad \text{for $\{1,3,2,4,5\}$,}\\
(h):  \ \  &(1325)(1354)\underline{(2345)}(1324)(1254)=1, \quad \text{for $\{1,3,2,5,4\}$,}\\
(i):  \ \  &\underline{(1324)}(1435)\underline{(2354)}(1425)(1235)=1, \quad \text{for $\{1,4,2,3,5\}$,}\\
(j):  \ \  &(1425)\underline{(1354)}\underline{(2435)}\underline{(1324)}(1253)=1, \quad \text{for $\{1,4,2,5,3\}$,}\\
(k):  \ \  &\underline{(1325)}\underline{(1435)}\underline{(2345)}\underline{(1425)}(1234)=1, 
\quad \text{for $\{1,5,2,3,4\}$,}\\
(l):  \ \  &\underline{(1425)}\underline{(1345)}\underline{(2435)}\underline{(1325)}(1243)=1, \quad \text{for $\{1,5,2,4,3\}$.}\\
\end{align*}
Then we use the dihedral relations to reduce the generating set to 
the following set $\Theta$ consisting of $15$ elements: 
\[\Theta:=\left\{\begin{array}{l}
(1234), (1235), (1243), (1245), (1253), (1254),\\ 
(1324), (1325), (1345), (1354), (1425), (1435),\\
(2345), (2354), (2435)
\end{array}\right\}\]
and erase the involutive relations for the discarded generators. 
Consequently, we have a presentation $P=\langle \Theta \mid R\rangle$ 
of $\Gamma_5^4$ consisting of $15$ generators and 
$27$ relations. 

For this presentation, consider the map $\nu \colon \Theta \to \mathbb{Z}/2\mathbb{Z}$ defined by 
\[\nu ((ijkl))=\begin{cases} 1 & \text{(if $1 \in \{i,j,k,l\}$)}\\ 0 & \text{(otherwise)}\end{cases}.\]
It extends to a well-defined homomorphism $\nu \colon \Gamma_5^4 \to \mathbb{Z}/2\mathbb{Z}$. 

\begin{thm}\label{thm:RS}
The abelianization of the kernel of the homomorphism $\nu$ has a $\mathbb{Z}$-summand. 
\end{thm}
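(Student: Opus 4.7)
The plan is to apply the Reidemeister-Schreier rewriting process to the presentation $P=\langle \Theta \mid R \rangle$ of $\Gamma_5^4$, using the index-$2$ subgroup $K:=\Ker \nu$. Choose $t=(1234)$ as the representative of the nontrivial coset and take $T=\{1,t\}$ as a Schreier transversal. Because every element of $\Theta$ is an involution, $t^{-1}=t$ and the Schreier generators take very simple forms.

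First I would enumerate a free generating set for $K$. The Schreier formula produces three families: $e_x:=x$ and $f_x:=txt$ for each of the three $x\in\Theta$ with $\nu(x)=0$, and $g_x:=tx$ for each of the eleven $x\in\Theta\setminus\{t\}$ with $\nu(x)=1$; note that $xt=g_x^{-1}$. This gives $17$ generators of $K$, and hence at most $17$ generators of $H_1(K)$.

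Next I would rewrite each of the $27$ defining relations $r\in R$ together with its $t$-conjugate $trt$, producing $54$ relations for $K$. Of these, the $30$ arising from the $15$ involutive relations are immediate: for $x$ with $\nu(x)=0$ they read $e_x^2=f_x^2=1$, so $[e_x]$ and $[f_x]$ are $2$-torsion in $H_1(K)$, while for $x$ with $\nu(x)=1$ they merely reproduce $(xt)(tx)=1$, i.e., $g_x^{-1}=xt$. Consequently the torsion-free quotient of $H_1(K)$ is generated by the eleven classes $[g_x]$.

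The bulk of the work is to rewrite the $12$ pentagon relations and their $t$-conjugates, producing $24$ additional relations. Each pentagon is a word of length $5$, and its Schreier rewriting yields a product of at most five generators of $K$; after abelianizing and killing the $2$-torsion contributions of $[e_x]$ and $[f_x]$, each becomes a $\{-1,0,+1\}$-linear equation in the eleven variables $[g_x]$. The final step is to verify that this $24\times 11$ integer linear system has a nonzero kernel vector, which then defines a surjective homomorphism $K\to \mathbb{Z}$ and hence produces a $\mathbb{Z}$-summand of $H_1(K)$. I expect the main obstacle to be the careful bookkeeping required to assemble the $24$ pentagon equations uniformly; once the coefficient matrix is in hand, exhibiting a nontrivial integer solution is routine linear algebra.
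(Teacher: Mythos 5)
Your setup is exactly the paper's: the same Schreier transversal $\{1,(1234)\}$, the same elimination down to $17$ generators of $K=\Ker\nu$ (your $e_x,f_x$ are the paper's $\alpha(2jkl),\beta(2jkl)$, and your $g_x=tx$ is the inverse of the paper's retained generator $\alpha(1jkl)=(1jkl)(1234)$), and the same reduction of the free part of $H_1(K)$ to a $24\times 11$ integer linear system coming from the rewritten pentagon relations, with the six classes $[e_x],[f_x]$ correctly identified as $2$-torsion. All of that bookkeeping is correct, and the logical step from a nonzero (primitive) kernel vector to a split surjection $H_1(K)\twoheadrightarrow\mathbb{Z}$ is also fine.

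The gap is that you stop at the one step that carries the content of the theorem. You must actually show that the $24\times 11$ coefficient matrix has rank strictly less than $11$. A system of $24$ equations in $11$ unknowns generically has only the zero solution, so the existence of a nontrivial kernel vector is not ``routine linear algebra'' that can be presumed once the matrix is assembled --- it is precisely the assertion to be proved, and if it failed the conclusion would be that $H_1(K)$ is all torsion, i.e., the opposite of the theorem. To close the argument you must either exhibit an explicit integer vector annihilated by all $24$ rows or compute the rank (equivalently the Smith normal form). The paper carries out this computation and obtains $H_1(\Ker\nu)\cong\mathbb{Z}^2\oplus(\mathbb{Z}/2\mathbb{Z})^6$, the free part being generated by the classes of $(1324)(1234)$ and $(1425)(1234)$; in your notation, the matrix has corank $2$ and $[g_{(1324)}]$, $[g_{(1425)}]$ survive as a basis of the free quotient. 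Once you supply that verification, your argument is complete and coincides with the paper's proof.
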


\begin{proof}
We obtain a presentation of $\Ker \nu$ by applying the Reidemeister-Schreier method 
to the above presentation $P$ and abelianize it. 
We refer to the book Magnus-Karrass-Solitar \cite[Section 2.3]{MKS} for the details on 
the Reidemeister-Schreier method. 

For the presentation $P$, we may take $T:=\{1, (1234)\}$ as a set of Schreier transversals. 
Then the Reidemeister-Schreier method says that $\Ker \nu$ is generated by 
the set $\{ t x \overline{tx}^{-1} \in \Ker \nu \mid t \in T, x \in \Theta\}$, where for $y \in \Gamma_5^4$, 
we have $\overline{y}=1$ if $y \in \Ker \nu$ and $\overline{y}=(1234)$ otherwise. 
Explicitly, we have the following 
generators of $\Ker \nu$: 
\begin{align*}
&\alpha (1234):=(1234) (1234)^{-1},\\
&\alpha (1235):=(1235) (1234)^{-1}, \ \ \alpha (1243):=(1243) (1234)^{-1}, \ 
\ldots \ , \ \ \alpha (1435):=(1435)(1234)^{-1},\\
&\alpha (2345):=(2345), \ \ \alpha (2354):=(2354), \ \ \alpha (2435):=(2435),\\
&\beta (1234):=(1234)(1234), \\
&\beta (1235):=(1234) (1235), \ \ \beta (1243):=(1234) (1243),  \ 
\ldots, \ \beta (1435):=(1234)(1435),\\
&\beta (2345):=(1234)(2345)(1234)^{-1}, \ \ \beta (2354):=(1234)(2354)(1234)^{-1},\\
&\beta (2435):=(1234)(2435)(1234)^{-1}.
\end{align*}
The relations consist of two types (see \cite[Theorem 2.9]{MKS}). The first type is $\alpha (1234)=1$ since 
$\alpha (1234)=(1234)(1234)^{-1}$ is freely equal to the trivial element. 
The second type is 
\[\{\tau (t r t^{-1})=1 \mid t \in T, r \in R\},\] 
where $\tau$ is the Reidemeister-Schreier rewriting process for $T$. 
Explicitly the relations of the second type are given as follows: 

\medskip
\noindent
(From the involutive relations)
\begin{align*}
&\tau((1234)(1234))=\alpha (1234)\beta (1234)=1,\\
&\tau((1235)(1235))=\alpha (1235)\beta (1235)=1,\\
& \hskip 87pt \vdots\\
&\tau((1435)(1435))=\alpha (1435)\beta (1435)=1,\\
&\tau((2345)(2345))=\alpha (2345)\alpha (2345)=1,\\
&\tau((2354)(2354))=\alpha (2354)\alpha (2354)=1,\\
&\tau((2435)(2435))=\alpha (2435)\alpha (2435)=1,\\
& \\
&\tau((1234)(1234)(1234)(1234)^{-1})=\alpha (1234)\beta (1234)=1,\\
&\tau((1234)(1235)(1235)(1234)^{-1})=\alpha (1234)\beta (1235)\alpha (1235)\alpha (1234)^{-1}=1,\\
& \hskip 163pt \vdots\\
&\tau((1234)(1435)(1435)(1234)^{-1})=\alpha (1234)\beta (1435)\alpha (1435)\alpha (1234)^{-1}=1,\\
&\tau((1234)(2345)(2345)(1234)^{-1})=\alpha (1234)\beta (2345)\beta (2345)\alpha (1234)^{-1}=1,\\
&\tau((1234)(2354)(2354)(1234)^{-1})=\alpha (1234)\beta (2354)\beta (2354)\alpha (1234)^{-1}=1,\\
&\tau((1234)(2435)(2435)(1234)^{-1})=\alpha (1234)\beta (2435)\beta (2435)\alpha (1234)^{-1}=1,\\
\end{align*}

\medskip
\noindent
(From the pentagon relations) \ \ We simply write $(x)$ for the left hand side of 
the $12$ pentagon relations mentioned before, where $x=a,b,c,\ldots, l$. 

\begin{align*}
&\tau((a))=\alpha (1234)\beta (1245)\alpha (2345)\alpha (1235)\beta (1345)=1,\\
&\tau((b))=\alpha (1235)\beta (1254)\alpha (2354)\alpha (1234)\beta (1354)=1,\\
&\tau((c))=\alpha (1243)\beta (1235)\alpha (2435)\alpha (1245)\beta (1435)=1,\\
&\tau((d))=\alpha (1245)\beta (1253)\alpha (2354)\alpha (1243)\beta (1354)=1,\\
&\tau((e))=\alpha (1253)\beta (1234)\alpha (2435)\alpha (1254)\beta (1435)=1,\\
&\tau((f))=\alpha (1254)\beta (1243)\alpha (2345)\alpha (1253)\beta (1345)=1,\\
&\tau((g))=\alpha (1324)\beta (1345)\alpha (2354)\alpha (1325)\beta (1245)=1,\\
&\tau((h))=\alpha (1325)\beta (1354)\alpha (2345)\alpha (1324)\beta (1254)=1,\\
&\tau((i))=\alpha (1324)\beta (1435)\alpha (2354)\alpha (1425)\beta (1235)=1,\\
&\tau((j))=\alpha (1425)\beta (1354)\alpha (2435)\alpha (1324)\beta (1253)=1,\\
&\tau((k))=\alpha (1325)\beta (1435)\alpha (2345)\alpha (1425)\beta (1234)=1,\\
&\tau((l))=\alpha (1425)\beta (1345)\alpha (2435)\alpha (1325)\beta (1243)=1,
\end{align*}
\begin{align*}
&\tau((1234)(a)(1234)^{-1})=\alpha (1234)\beta (1234)\alpha (1245)\beta (2345)\beta (1235)\alpha (1345)\beta (1234)^{-1}=1,\\
&\tau((1234)(b)(1234)^{-1})=\alpha (1234)\beta (1235)\alpha (1254)\beta (2354)\beta (1234)\alpha (1354)\beta (1234)^{-1}=1,\\
&\tau((1234)(c)(1234)^{-1})=\alpha (1234)\beta (1243)\alpha (1235)\beta (2435)\beta (1245)\alpha (1435)\beta (1234)^{-1}=1,\\
&\tau((1234)(d)(1234)^{-1})=\alpha (1234)\beta (1245)\alpha (1253)\beta (2354)\beta (1243)\alpha (1354)\beta (1234)^{-1}=1,\\
&\tau((1234)(e)(1234)^{-1})=\alpha (1234)\beta (1253)\alpha (1234)\beta (2435)\beta (1254)\alpha (1435)\beta (1234)^{-1}=1,\\
&\tau((1234)(f)(1234)^{-1})=\alpha (1234)\beta (1254)\alpha (1243)\beta (2345)\beta (1253)\alpha (1345)\beta (1234)^{-1}=1,\\
&\tau((1234)(g)(1234)^{-1})=\alpha (1234)\beta (1324)\alpha (1345)\beta (2354)\beta (1325)\alpha (1245)\beta (1234)^{-1}=1,\\
&\tau((1234)(h)(1234)^{-1})=\alpha (1234)\beta (1325)\alpha (1354)\beta (2345)\beta (1324)\alpha (1254)\beta (1234)^{-1}=1,\\
&\tau((1234)(i)(1234)^{-1})=\alpha (1234)\beta (1324)\alpha (1435)\beta (2354)\beta (1425)\alpha (1235)\beta (1234)^{-1}=1,\\
&\tau((1234)(j)(1234)^{-1})=\alpha (1234)\beta (1425)\alpha (1354)\beta (2435)\beta (1324)\alpha (1253)\beta (1234)^{-1}=1,\\
&\tau((1234)(k)(1234)^{-1})=\alpha (1234)\beta (1325)\alpha (1435)\beta (2345)\beta (1425)\alpha (1234)\beta (1234)^{-1}=1,\\
&\tau((1234)(l)(1234)^{-1})=\alpha (1234)\beta (1425)\alpha (1345)\beta (2435)\beta (1325)\alpha (1243)\beta (1234)^{-1}=1. 
\end{align*}

By the relations from the involutive relations, we may erase $\alpha (1234)$ and $\beta (1234)$. 
Also we may erase $\beta (1jkl)$ since it equals to $\alpha (1jkl)^{-1}$. 
We have $6$ involutive relations $\alpha (2jkl)^2=\beta (2jkl)^2=1$. These simplify the presentation, so that 
the resulting one has $17$ generators ($14$ generators $\alpha (ijkl)$ and $3$ generators $\beta (2jkl)$) 
and $30$ relations ($6$ involutive relations and $24$ relations above). 
We can write a presentation matrix for $H_1 (\Ker \nu)$ and compute its Smith normal form. 
Here we omit the details since it is a usual matrix computation. The result is 
$H_1 (\Ker \nu) \cong \mathbb{Z}^2 \oplus (\mathbb{Z}/2\mathbb{Z})^6$ with 
the generators of $\mathbb{Z}^2$ given by $\alpha (1324)=(1324)(1234)$ and $\alpha (1425)=(1425)(1234)$, 
which completes the proof. 
\end{proof}

\section{Increasing-order version of $\Gamma_n^4$}\label{sec:delta}

Finally, we introduce new groups $\Delta_n^4$, which look simpler than $\Gamma_n^4$. 
They might be helpful to investigate the structure of $\Gamma_n^4$. 

\begin{definition}\label{def:delta}
For $n \ge 4$, the group $\Delta_n^4$ is defined by the following presentation: 

(Generators) $\{(ijkl) \mid 1 \le i < j < k < l \le n\}$

(Relations) There are three types of relations: 
\[\begin{array}{ll}
 (1)&  (ijkl)^2 =1; \\
 (2)&  (ijkl)(stuv)=(stuv)(ijkl),\ (|\{i,j,k,l\} \cap \{s,t,u,v\}| \le 2);\\
 (3)&  (ijkl)(ijlm)(jklm)(ijkm)(iklm)=1, \ (1 \le i < j < k < l < m \le n).\\
\end{array}\]
\end{definition}

Note that we have the natural homomorphism $\Delta_n^4 \to \Gamma_n^4$ sending 
$(ijkl) \in \Delta_n^4$ to $(ijkl) \in \Gamma_n^4$. When $n=4$, the group $\Delta_4^4$ is given by 
\[\Delta_4^4=\langle (1234) \mid (1234)^2=1\rangle \cong \mathbb{Z}/2\mathbb{Z}.\]
We will discuss $\Delta_n^4$ in a way similar to the one for $\Gamma_n^4$ in previous sections.  

\begin{thm}\label{thm:delta}
For $n \ge 4$, we have the following. 

$(1)$ $\Delta_n^4$ needs $N'_n:=\displaystyle\binom{n-1}{3}=\displaystyle\frac{(n-1)(n-2)(n-3)}{6}$ elements to generate.

$(2)$ The set $\Lambda':=\{(1jkl) \mid 2 \le j < k < l \le n\}$ is a minimal generating set of $\Delta_n^4$. 

$(3)$ $H_1 (\Delta_n^4) \cong (\mathbb{Z}/2\mathbb{Z})^{N'_n}$. 
\end{thm}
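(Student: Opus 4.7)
The plan is to prove parts $(1)$, $(2)$, $(3)$ simultaneously by matching an upper bound on the minimum number of generators (coming from $\Lambda'$) with a lower bound (coming from the abelianization). The crucial observation is that, after abelianization, the pentagon relations of $\Delta_n^4$ coincide with the mod-$2$ simplicial boundaries of the $4$-faces of the standard $(n-1)$-simplex, so the whole computation reduces to the acyclicity of a simplex.

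First, I would verify that $\Lambda'$ generates $\Delta_n^4$. Any generator $(abcd)$ with $a=1$ lies in $\Lambda'$ by definition; if $a \ge 2$, then $1 < a < b < c < d$, and the pentagon relation for the quintuple $\{1,a,b,c,d\}$ reads
\[(1abc)(1acd)(abcd)(1abd)(1bcd)=1.\]
Solving for $(abcd)$ and applying the involutive relation $g^{-1}=g$ yields
\[(abcd) = (1acd)(1abc)(1bcd)(1abd),\]
a word in $\Lambda'$. Consequently $\Lambda'$ generates $\Delta_n^4$, and the minimum number of generators is at most $|\Lambda'|=N'_n$.

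For the matching lower bound I would identify $H_1(\Delta_n^4)$ with a simplicial cokernel over $\mathbb{Z}/2\mathbb{Z}$. Since every generator is an involution, $H_1(\Delta_n^4)$ is a $\mathbb{Z}/2\mathbb{Z}$-vector space; the commutative relations become trivial after abelianization, and the pentagon relation for $\{i<j<k<l<m\}$ becomes
\[(ijkl)+(ijkm)+(ijlm)+(iklm)+(jklm)=0,\]
which is precisely the boundary $\partial_4\{i,j,k,l,m\}$ in the simplicial chain complex $C_{\ast} = C_{\ast}(\Delta^{n-1};\mathbb{Z}/2\mathbb{Z})$ of the standard $(n-1)$-simplex on vertex set $[n]$, under the bijection $(ijkl)\leftrightarrow\{i,j,k,l\}$. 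Therefore $H_1(\Delta_n^4)\cong C_3/\mathrm{Im}(\partial_4)$. By acyclicity of the simplex, $\mathrm{Im}(\partial_4)=\Ker(\partial_3)$, so
\[H_1(\Delta_n^4) \cong \mathrm{Im}(\partial_3) = \Ker(\partial_2),\]
and the same dimension count as in the proof of Theorem \ref{thm:abelianization_gamma} gives
\[\dim_{\mathbb{Z}/2\mathbb{Z}}\Ker(\partial_2) = \binom{n}{3} - \binom{n}{2} + (n-1) = \binom{n-1}{3} = N'_n.\]

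Combining the two bounds yields $N'_n = \dim_{\mathbb{Z}/2\mathbb{Z}} H_1(\Delta_n^4) \le (\text{min.\ number of generators}) \le |\Lambda'| = N'_n$, so all three parts follow at once: $(3)$ is the content of the second step, $(1)$ is the matching of bounds, and $(2)$ follows because $\Lambda'$ realizes the minimum. The only genuine content is the identification of pentagon relations with the mod-$2$ boundary map $\partial_4$; once that is in place, everything reduces to the acyclicity of the $(n-1)$-simplex, and I do not anticipate any serious obstacle beyond what already appears in Section \ref{sec:gamma}.
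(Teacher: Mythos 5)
Your proposal is correct, and its overall skeleton (upper bound from $\Lambda'$ via the pentagon relation for $\{1,a,b,c,d\}$, lower bound from the mod-$2$ abelianization, then matching the two) is the same as the paper's; your formula $(abcd)=(1acd)(1abc)(1bcd)(1abd)$ is exactly the identity used there. The one place where you genuinely diverge is the lower bound. The paper maps $\Delta_n^4$ into $(\mathbb{Z}/2\mathbb{Z})[n]_3$ by the homomorphism $\Phi_3^{(2)}$ (which is the boundary $\partial_3$ in your notation) and observes that the $N'_n$ images $\Phi_3^{(2)}((1jkl))$ are linearly independent, since $\{j,k,l\}$ is the unique basis vector in $\Phi_3^{(2)}((1jkl))$ not containing $1$; this gives $\dim H_1 \ge N'_n$ in one line. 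You instead read off $H_1(\Delta_n^4)$ exactly from the presentation as $C_3/\mathrm{Im}\,\partial_4$ and apply acyclicity of the simplex twice to get $H_1 \cong \Ker\partial_2 \cong (\mathbb{Z}/2\mathbb{Z})^{\binom{n-1}{3}}$ on the nose. Your route is slightly longer but more conceptual: it computes the abelianization directly without reference to any particular generating set, and it makes transparent why the answer is $\binom{n-1}{3}$ (the same count already carried out for $\Im\Phi_3^{(2)}$ in the proof of Theorem \ref{thm:abelianization_gamma}); the paper's route is a quicker ad hoc independence check. Both are complete and yield all three assertions.
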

\begin{proof}
The case where $n=4$ is clear. We assume that $n \ge 5$. The pentagon relation for $\{1,i,j,k,l\}$ says that 
\[(ijkl)=(1ikl)(1ijk)(1jkl)(1ijl),\]
which shows that $\Lambda'$ is a generating set. We now consider the homomorphism 
\[\Phi_3^{(2)} \colon \Delta_n^4 \longrightarrow (\mathbb{Z}/2\mathbb{Z}) [n]_3\]
defined by the same formula as $\Phi_3^{(2)}$ for $\Gamma_n^4$. 
We have 
\[\Phi_3^{(2)}((1jkl))=\{1,j,k\} + \{1,j,l\} + \{1,k,l\} +\{j,k,l\}.\] 
It is easy to prove our assertions from this equality. 
\end{proof}

\begin{remark}\label{rem:deltahat}
As in Section \ref{sec:gammahat}, we may define the ``hat version'' $\widehat{\Delta_n^4}$ 
of $\Delta_n^4$. 
The argument in Theorem \ref{thm:delta} is applicable to $\widehat{\Delta_n^4}$ 
almost word-by-word and we can get a similar statement. 
\end{remark}

When $n=5$, the presentation of $\Delta_5^4$ is rewritten as 
\[\Delta_5^4=\left\langle \begin{array}{c|l}
(1245),(1234),(1345),(1235) & \begin{array}{l}
(1245)^2=(1234)^2=(1345)^2=(1235)^2=1\\
((1245)(1234)(1345)(1235))^2=1
\end{array}\end{array}\right\rangle.  \]

\begin{thm}\label{thm:delta5}
The group $\Delta_5^4$ has a subgroup $G$ of index $2$ with 
$H_1 (G) \cong \mathbb{Z}^2 \oplus (\mathbb{Z}/2\mathbb{Z})$. 
Therefore $\Delta_5^4$ is an infinite non-commutative group and 
it does not have Property $(\mathrm{T})$.
\end{thm}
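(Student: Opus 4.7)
The plan is to adapt the Reidemeister--Schreier strategy of Section \ref{subsec:RS} to $\Delta_5^4$, whose presentation on just four involutive generators $a := (1245)$, $b := (1234)$, $c := (1345)$, $d := (1235)$ with a single non-involutive relation $(abcd)^2 = 1$ makes the bookkeeping much lighter. The goal is to exhibit an index-$2$ subgroup $G$ with $H_1(G) \cong \mathbb{Z}^2 \oplus \mathbb{Z}/2\mathbb{Z}$, after which all three asserted consequences will follow by standard arguments: a group whose abelianization is infinite is infinite, a non-abelian group cannot equal its abelianization, and Property (T) passes to finite-index subgroups while forcing abelian quotients to be finite.

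First, since every generator is an involution and $(abcd)^2$ has even length, the assignment $a,b,c,d \mapsto 1 \in \mathbb{Z}/2\mathbb{Z}$ extends to a well-defined surjection $\nu \colon \Delta_5^4 \to \mathbb{Z}/2\mathbb{Z}$; set $G := \Ker \nu$. Using the Schreier transversal $T = \{1, a\}$, the Reidemeister--Schreier process produces at most eight generators of $G$, of which two (those coming from $(1,a)$ and $(a,a)$) are trivial. Naming $X := ba$, $Y := ca$, $Z := da$, the rewritings of the involutive relations $b^2 = c^2 = d^2 = 1$ identify the remaining three generators $ab$, $ac$, $ad$ with $X^{-1}$, $Y^{-1}$, $Z^{-1}$, while $a^2 = 1$ contributes nothing. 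Rewriting the pentagon relation $(abcd)^2 = 1$ once for each of the two coset representatives then produces the two relations
\[
(X^{-1} Y Z^{-1})^2 = 1 \qquad \text{and} \qquad (X Y^{-1} Z)^2 = 1,
\]
giving the presentation $G = \langle X, Y, Z \mid (X^{-1}YZ^{-1})^2, (XY^{-1}Z)^2 \rangle$.

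Abelianizing, both relations collapse to $\pm 2(X - Y + Z) = 0$, so $H_1(G) \cong \mathbb{Z}^3 / \langle (2,-2,2) \rangle$, whose Smith normal form is $(2, 0, 0)$ and so yields $\mathbb{Z}^2 \oplus \mathbb{Z}/2\mathbb{Z}$. From this, $G$ is infinite and so is $\Delta_5^4$; if $\Delta_5^4$ were abelian it would equal the finite group $H_1(\Delta_5^4) \cong (\mathbb{Z}/2\mathbb{Z})^{N'_5}$ given by Theorem \ref{thm:delta}, contradiction; and since $G$ surjects onto $\mathbb{Z}$ it cannot have Property (T), so $\Delta_5^4$ does not either. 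The main place where care is required is the tracking of coset representatives along the eight letters of $(abcd)^2$ starting at $t = a$: a sign error there would spoil the identification of the rewritten pentagon relations as $\pm 2(X - Y + Z) = 0$ and could artificially kill the free part of $H_1(G)$, so this step should be verified letter by letter.
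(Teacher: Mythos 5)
Your argument is correct, and it reaches the paper's conclusion by a genuinely different route. The paper builds a $2$-complex $X$ with $\pi_1(X)\cong\Delta_5^4$ by attaching a $2$-cell along $(abcd)^2$ to the wedge $\bigvee_{i=1}^4\mathbb{R}P^2$, and then reads off $\pi_1$ of the induced double cover $Y$ (four spheres glued at two poles plus two $2$-cells), obtaining $\langle a,b,c\mid acac,\ (bc^{-1}b^{-1}a^{-1})^2\rangle$ and hence $H_1\cong\mathbb{Z}^2\oplus\mathbb{Z}/2\mathbb{Z}$. Since in that cover each generator $q(\gamma_i)$ fails to lift to a loop, the paper's subgroup is exactly your $\Ker\nu$, so the two computations describe the same index-$2$ subgroup with different generating sets, and the abelianized relations ($2(a+c)=0$ there, $2(X-Y+Z)=0$ here) agree. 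Your Reidemeister--Schreier bookkeeping checks out: with $T=\{1,a\}$ the rewrites of $(abcd)^2$ and $a(abcd)^2a^{-1}$ are indeed $(X^{-1}YZ^{-1})^2$ and $(XY^{-1}Z)^2$ respectively (the second after eliminating $s_{a,a}$), and the Smith normal form step is right. One small imprecision: the Schreier generator $s_{a,a}=a\cdot a\cdot\overline{aa}^{-1}=a^2$ is not \emph{freely} trivial; it is killed by the rewritten relator $\tau(a^2)=s_{a,a}$, so it is eliminated by a Tietze move rather than being trivial from the start. This does not affect anything downstream. The trade-off between the two proofs: yours is self-contained algebra in the spirit of Section~\ref{subsec:RS} and generalizes mechanically to other index-$2$ subgroups, while the paper's topological model makes the double cover and the identification $G\cong\pi_1(Y)$ geometrically transparent without any rewriting process.
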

\begin{proof}
We construct a cell complex $X$ with $\pi_1 (X) \cong \Delta_5^4$ and its 
double cover $Y$. Take the usual cell decomposition of the $2$-sphere $S^2$ 
having two $0$-cells $(0,0,1)$ and $(0,0,-1)$. We write $\gamma$ for 
the path in $S^2$ given by $\gamma (t)=(0,\sin (\pi t), \cos (\pi t))$, which gives 
one of the two $1$-cells of the cell decomposition. 

Let $Y_1$ be the cell complex obtained from the 
disjoint union of four copies $S_1$, $S_2$, $S_3$, $S_4$ of the $2$-sphere $S^2$ 
by identifying the four points $(0,0,1)$ (resp.\ $(0,0,-1)$) in $S_i$ with $i=1,2,3,4$. 
We denote the identified point by $p_N$ (resp. $p_S$). 
Let $\gamma_i$ be the copy of $\gamma$ in $S_i \subset Y_1$ for $i=1,2,3,4$. 
It goes from $p_N$ to $p_S$. We write $\overline{\gamma_i}$ for the inverse path of $\gamma_i$. 
Then 
\[a=\gamma_1 \overline{\gamma_2}, \qquad b=\gamma_2 \overline{\gamma_3}, 
\qquad c=\gamma_3 \overline{\gamma_4}\] 
are loops generating $\pi_1 (Y_1,p_N) \cong \mathbb{Z}^{\ast 3}$, a free group of rank $3$. 
We attach to $Y_1$ two $2$-cells $e_1^2$ and $e_2^2$ as follows: 
$e_1^2$ is attached along the word $acac$ and 
$e_2^2$ is attached along the loop 
$\overline{\gamma_1} \gamma_2 \overline{\gamma_3} \gamma_4 
\overline{\gamma_1} \gamma_2 \overline{\gamma_3} \gamma_4$. 
We denote the resulting cell complex by $Y$. 
Define the free involution $\iota$ of $Y$ so that 
$\iota |_{S_i}$ is the antipodal map of the $2$-sphere $S_i$ and 
$\iota$ exchanges $e_1^2$ and $e_2^2$ naturally. 
Let $X$ be the quotient complex $Y/\iota$ and 
$q \colon Y \to X$ be the natural projection. 
The cell complex $X$ is obtained from 
$X_1:=Y_1/\iota$ by attaching a $2$-cell $e^2$ along the loop $q(acac)$. 
Now $X_1$ is homeomorphic to the bouquet $\displaystyle\bigvee_{i=1}^4 \mathbb{R}P^2$ of 
four copies of the real projective plane $\mathbb{R}P^2$. 
The paths $q(\gamma_i)$ in $X$ 
are loops generating $\pi_1 (X_1,q (p_N)) \cong (\mathbb{Z}/2\mathbb{Z})^{\ast 4}$. 
We name the four generators $(1245), (1234), (1345), (1235)$. 
The $2$-cell $e^2$ is attached along $((1245)(1234)(1345)(1235))^2$. Hence 
$\pi_1 (X,q(p_N)) \cong \Delta_5^4$ and $Y$ is a double cover of $X$. 
From the construction of $Y$, it is easy to see that 
\[\pi_1 (Y,p_N) \cong \langle a, b, c \mid acac=1, bc^{-1}b^{-1}a^{-1} bc^{-1}b^{-1}a^{-1}=1 \rangle,\]
Here, the loop $bc^{-1}b^{-1}a^{-1} bc^{-1}b^{-1}a^{-1}$ is freely homotopic to the loop 
$\overline{\gamma_1} \gamma_2 \overline{\gamma_3} \gamma_4 
\overline{\gamma_1} \gamma_2 \overline{\gamma_3} \gamma_4$. 
Then we have $H_1 (Y) \cong \mathbb{Z}^2 \oplus (\mathbb{Z}/2\mathbb{Z})$ generated by 
$a, b, a+c$ with the relation $2(a+c)=0$. We may take $\pi_1 (Y,p_N)$ as $G$. 
\end{proof}

\begin{remark}
When $n=6$, we checked with a help of GAP computations that 
the natural homomorphism $\Delta_6^4 \to \Gamma_6^4$ is not injective. Details will be 
discussed in a forthcoming paper. 
\end{remark}

\noindent
{\it Acknowledgement} \quad 
The authors would like to thank Ken'ichi Ohshika, Sumio Yamada and Nariya Kawazumi 
for giving the authors a chance to have a presentation of this work in the conference. 
Also, they would like to thank the referee for helpful comments to improve the paper.

\bibliographystyle{amsplain}

\end{document}